\theoremstyle{plain}
\newtheorem{theorem}{Theorem}[section]
\newtheorem{lemma}[theorem]{Lemma}
\theoremstyle{definition}
\newtheorem{remark}[theorem]{Remark}
\theoremstyle{remark}
\numberwithin{equation}{section}
\newcommand{\sym}{\mathrm{sym}}
\newcommand{\Tl}{\mathbb{T}}
\newcommand{\T}{\mathcal{T}}
\newcommand{\D}{\mathrm{D}}
\newcommand{\ep}{\varepsilon}
\newcommand{\e}{\varepsilon}
\newcommand{\ffi}{\varphi}
\newcommand{\Span}{\mathrm{span}\,}
\newcommand{\C}{\mathbb{C}}
\newcommand{\R}{\mathbb{R}}
\newcommand{\N}{\mathbb{N}}
\newcommand{\Z}{\mathbb{Z}}
\newcommand{\E}{\mathcal{E}}
\newcommand{\F}{\mathcal{F}}
\newcommand{\AS}{\mathcal{S}}
\newcommand{\AD}{\mathcal{D}}
\newcommand{\di}{\textrm{dist}}
\newcommand{\ud}{\mathrm{d}}
\newcommand{\Om}{\Omega}
\newcommand{\supp}{\mathrm{supp}\,}
\newcommand{\M}{\mathcal{M}}
\newcommand{\as}{\mathscr{A}\!\mathscr{S}}
\newcommand{\Huno}{\mathcal H^1}
\newcommand{\weakly}{\rightharpoonup}           
\newcommand{\weakstar}{\stackrel{*}{\weakly}}   
\newcommand{\fla}{\stackrel{\mathrm{flat}}{\rightarrow}}
\newcommand{\flt}{\mathrm{flat}}
\newcommand{\conv}{\mathrm{conv}}
\newcommand{\skw}{{\mathrm{skew}}}
\newcommand{\DT}{\mathbb{T}'_{+}}
\def\XXint#1#2#3{{\setbox0=\hbox{$#1{#2#3}{\int}$}
     \vcenter{\hbox{$#2#3$}}\kern-.5\wd0}}
\newcommand{\newatop}{\genfrac{}{}{0pt}{1}} 
\def\@splitop#1#2\@nil{$\mathscr{#1}\!\!$\calligra#2\,\,}
\newcommand*\DeclareCursiveOperator[2]{%

 \newcommand#1{\mathop{\mbox{\@splitop#2\@nil}}\nolimits}}
\DeclareCursiveOperator{\Anew}{A}
\DeclareCursiveOperator{\Bnew}{B}
\DeclareCursiveOperator{\Cnew}{C}
\DeclareCursiveOperator{\Dnew}{D}
\DeclareCursiveOperator{\Enew}{E}
\DeclareCursiveOperator{\Qnew}{Q}
\title[Coarse-graining for edge dislocations] {Coarse-graining of a discrete model for edge dislocations in the regular triangular lattice}
\thanks{\today}
\author[R. Alicandro]
{R. Alicandro}
\address[Roberto Alicandro]{DIEI, Universit\`a di Cassino e del Lazio meridionale, via Di Biasio 43, 03043 Cassino (FR), Italy}
\email[R. Alicandro]{alicandr@unicas.it}
\author[L. De Luca]
{L. De Luca}
\address[Lucia De Luca]{Istituto per le Applicazioni del Calcolo ``M. Picone'', IAC-CNR, via dei Taurini 19, 00185 Roma, Italy}
\email[L. De Luca]{lucia.deluca@cnr.it}
\author[G. Lazzaroni]
{G. Lazzaroni}
\address[Giuliano Lazzaroni]{DiMaI, Universit\`a di Firenze, Viale Morgagni 67/a, 50134 Firenze, Italy}
\email[G. Lazzaroni]{giuliano.lazzaroni@unifi.it}
\author[M. Palombaro]
{M. Palombaro}
\address[Mariapia Palombaro]{DISIM, Universit\`a dell'Aquila, Via Vetoio, 67100 L'Aquila, Italy}
\email[M. Palombaro]{mariapia.palombaro@univaq.it}
\author[M. Ponsiglione]
{M. Ponsiglione}
\address[Marcello Ponsiglione]{Dip. di Matematica, Univ. Roma-I ``La Sapienza'', Piazzale Aldo Moro 5, 00185 Roma, Italy}
\email[M. Ponsiglione]{ponsigli@mat.uniroma1.it}
\begin{document}

\begin{abstract}
We consider a discrete model of planar elasticity where the particles, in the reference configuration,  sit on a regular triangular lattice and interact through nearest neighbor pairwise potentials, with bonds modeled as 
linearized elastic springs. 
Within this framework we introduce plastic slip fields, 
whose discrete circulation around each triangle detects the possible presence of an edge dislocation.

We provide  
a $\Gamma$-convergence analysis, as the lattice spacing tends to zero, of the elastic energy induced by  edge dislocations in the energy regime corresponding to a finite number of geometrically necessary dislocations. 

\par\medskip
{\noindent \textbf{Keywords:}
Dislocations, Topological singularities, Plasticity,
Discrete to continuum limits, 
Gamma-con\-vergence.
}
\par
{\noindent \textbf{MSC2020:}
74C05,  
58K45, 
70G75, 
74G65, 
49J45. 
}
\end{abstract}
\maketitle

\tableofcontents

\section*{Introduction}
Dislocations are  line defects in the periodic structure of crystals and are considered the main microscopic mechanism of plastic flow.
Idealized straight dislocations are  classified into two  types, {\it edge} and {\it screw}, while in real crystals dislocations are actually curved lines of mixed type  \cite{HB,HL}. 
From a mathematical viewpoint, dislocations can be seen as  
topological line singularities around which the elastic strain 
has non trivial circulation given by a vector of the underlying lattice and referred to as {\it Burgers vector}.

Here we focus on  planar elasticity, where  the relevant dislocations are of  edge nature and  can be seen as 
topological point singularities  of the elastic strain.
Specifically, we study the discrete elastic energy induced by a finite system of edge dislocations in a finite portion $\Omega\cap\ep\Tl$ of the regular triangular lattice $\e \Tl$\,, where $\ep$ is the lattice spacing. 

As customary in the linearized framework, we adopt the additive decomposition of the {\it discrete deformation gradient} $\ud u$\,, defined on pairs of nearest neighbors, into an elastic and a plastic part.
The latter is represented by an additional variable $\sigma$\,, referred to as {\it slip}, defined on pairs of nearest neighbors and taking values in the  set of lattice vectors. In this way  we identify dislocations as points around which the discrete circulation of the plastic slip $\sigma$, or, equivalently, the discrete circulation of $\ud u-\sigma$\,, is non trivial. Such a procedure agrees with the formalism of the eigenstrains considered in \cite{AO}. 

We focus on the simple case where 
nearest neighbors interact through linearized pairwise potentials;
specifically, in absence of singularities, the energy functional is defined by 
\begin{equation*}
G_\ep(u):=\frac{1}{2\ep^2}\sum_{\newatop{i,j\in\Omega\cap\ep\Tl}{|i-j|=\ep}}\big(\ud u(i,j)\cdot(j-i)\big)^2\,.
\end{equation*}
The above energy can be formally derived by linearization of suitable nonlinear frame invariant functionals (see Remark \ref{linearizzazione}); further rigorous linearization results  in terms of $\Gamma$-convergence are provided in  
  \cite{ALP, BSV, Schm09}. In the limit as $\e\to 0$ 
the energy $\frac{1}{\ep^2}G_\e$ approximates,
 up to a pre-factor, 
the continuum isotropic elastic energy with Lam\'e parameters both equal to $1$ (see Remark \ref{disccont}).

In presence of a plastic slip field $\sigma$, the energy functional should depend only on the elastic strain $\ud u-\sigma$\,, and hence the resulting elastic energy reads as
\begin{equation*}
F_\ep(u,\sigma):=\frac{1}{2\ep^2}\sum_{\newatop{i,j\in\Omega\cap\ep\Tl}{|i-j|=\ep}}\big((\ud u(i,j)-\sigma(i,j))\cdot(j-i)\big)^2\,.
\end{equation*} 
It is well known that the energy induced by an isolated edge dislocation, both in the discrete and in the continuum setting, is of order $\ep^2|\log\ep|$, while short dipoles of opposite dislocations induce a much smaller energy of order $\e^2$. 
The aim of the present paper  is to determine the asymptotic behavior (as $\ep\to 0$) of $F_\ep$ in the energy regime $\ep^2|\log\ep|$\,. 
This corresponds to a finite distribution of {\it geometrically necessary edge dislocations}, i.e., to the superposition of a finite number of isolated dislocations plus 
clusters of singularities with total Burgers vector equal to zero (usually referred to as {\it statistically stored dislocations}). 

Our results are obtained within the rigorous formalism of $\Gamma$-convergence and consist in a compactness property for the dislocation measures and in the derivation of the effective limit energy induced by dislocations. 
Specifically, 
our analysis shows that the discrete dislocation density generated by the plastic slip $\sigma$ converges, in the sense of flat convergence \eqref{defflat}, to a finite sum of Dirac deltas $\sum_{k=1}^Kb^k\delta_{x^k}$
with $b^k\in\Tl$\,.  The effective energy (namely the $\Gamma$-limit) ``counts'' the limiting singularities with a coefficient $\ffi(b^k)$ given by the explicit formula \eqref{specffi}. These results  rely on a similar  analysis performed 
 in \cite{GLP,DGP} in a {\it semi-discrete} setting within the so-called core radius approach.

We restrict our analysis to configurations of plastic slips inducing dislocations with  minimal mutual distance larger than or equal to $\sqrt 3 \ep$\,. Roughly speaking, this means that  
two neighboring triangles cannot both contain a dislocation. 
Such a
mild separation assumption \eqref{MSass} guarantees that each dislocation induces a core energy of order $\e^2$;  the latter estimate is essential  also in the core radius approach, and represents the starting point in the so-called ball construction technique \cite{SS2}, 
which is devised to provide sharp lower bounds. 
This is not a mere technical assumption; indeed, removing it, one can exhibit  unphysical  configurations  with zero stored elastic energy where all triangles contain a dislocation (see Remark \ref{counter-MS}).  
In this respect, it seems that our linearized model (without assuming \eqref{MSass}) fails to describe the core energy stored in a single triangle and induced by  the presence of a dislocation. On the other hand, under the mild separation assumption \eqref{MSass}, each dislocation is surrounded by an annulus of 
 ``elastically deformed''
triangles where a finite amount of  energy is stored, as it follows from an application of Korn's inequality.    

Although our model provides a good description of ``non-pathological'' dislocation configurations, it exhibits some degeneracies,  due to the discrete linearized framework relying on a reference configuration, as well as to the presence of the slip variable (see Section \ref{finalcomment}).
A natural way to rule out such degeneracies could be to include kinematic constraints on the slip fields mimicking (in a  discrete framework) pure shear/deviatoric stress conditions that are typically assumed in (continuum) elasto-plasticity. In this respect, in Section \ref{finalcomment}, we discuss a possible constraint on the slip fields, based on the (formal) linearization of the volume preserving condition on dislocation-free $\ep$-triangles; we remark that our $\Gamma$-convergence analysis still holds true under such a constraint.

Finally, some comments are in order. First, we do believe that a similar discrete-to-continuum analysis could be developed in the $\ep^2|\log\ep|^2$ regime, which corresponds to a limit diffused distribution of dislocations and to an effective energy accounting for both elastic and plastic effects (see \cite{Gi} for an analogous result within the core-radius approach). Second, it would be interesting to look at the coarse-graining for the energy $F_\ep$ in even higher regimes, such as that of the grain boundaries (see the recent papers \cite{GT, LL, FPP}). Third, 
a challenging issue would be 
to deal directly with nonlinear models built on the deformed configurations. An intriguing intermediate attempt  in this direction could be to consider {\it hybrid models} 
combining the mathematical efficiency of linearized theories together with the mechanical understanding of dislocations in terms of interpenetrating pentagon-heptagon pairs usually observed in the deformed configurations.
\bigskip
\paragraph{\bf Notation.} For every $m,n\in\N$ and for every matrix $M\in\R^{m\times n}$\,, we denote by $M^*\in\R^{n\times m}$ the transpose matrix of $M$\,, i.e.,  such that $M^{*}_{ij}=M_{ji}$ for every $i=1,\ldots,n$ and $j=1,\ldots,m$\,.
In what follows the elements $(x_1,x_2)$ of $\R^2$ will be also  identified (whenever it is convenient) with column vectors  $\left(\begin{array}{l}x_1\\ x_2\end{array}\right)=(x_1\quad x_2)^*$. 
We denote by $\{e_1,e_2\}$ the canonical basis of $\R^2$\,, where $e_1=(1,0)$ and $e_2=(0, 1)$\,. 
Given two vectors $a=(a_1,a_2)\,,\ b=(b_1,b_2)\in\R^2$\,, we set $a\wedge b:=a_1b_2-a_2b_1$\,;
moreover,
we set $a^\perp:=(-a_2, a_1)$\,. 

For every open set $U$ with $\partial U$ smooth and for every $s\in\partial U$\,, we denote by $\tau(s)$ the tangent vector to $\partial U$ at $s$\, defined as $\tau(s) = n^\perp(s)$, where $n(s)$ denotes the outer normal unitary vector to $\partial U$ at $s$. 

For every $p\in\R^2$ and for every $0<r<R$ we define the annulus $A_{r,R}(p):=B_R(p)\setminus \overline{B}_r(p)$\,, where, for every $\rho>0$\,, $B_\rho(p)$ and $\overline{B}_\rho(p)$ denote the open and the closed ball centered at $p$ with radius $\rho$\,, respectively.

Moreover, for every bounded open set $A\subset\R^2$ and for every $\beta\in L^2(A;\R^{2\times 2})$ the symbol $\mathrm{Curl}\,\beta$ denotes the row-by-row distributional curl of $\beta$\,; formally,
$$
\mathrm{Curl}\,\beta=(\partial_{x_1}\beta_{12}-\partial_{x_2}\beta_{11},\partial_{x_1}\beta_{22}-\partial_{x_2}\beta_{21})\,.
$$
Analogously, the symbol $\mathrm{Div}\,\beta$ will denote the row-by-row distributional divergence of $\beta$\,, formally given by
$$
(\mathrm{Div}\,\beta)_i=\partial_{x_1}\beta_{i1}+\partial_{x_2}\beta_{i2}\,,\qquad i\in\{1,2\}\,.
$$ 
Finally, $\M(\R^2;\R^2)$ denotes the class of $\R^2$ valued Radon measures on $\R^2$\,.

\section{The model and the main result}\label{sc:model}
In this section we introduce our model and state the main result.\\
\paragraph{\bf Reference lattice.} 
We set
$\nu:=\frac{1}{2} e_1+\frac{\sqrt 3}{2} e_2$ and $\eta:=-\frac 1 2 e_1+\frac{\sqrt 3}{2} e_2$\,.
Let $\Tl:=\Span_\Z\{e_1,\nu\}$ and set 
$$
T^+:=\conv\{0, e_1,\nu\}\quad\textrm{ and }\quad T^-:=\conv\{0, e_1,-\eta\}\,,
$$
where, for every $a,b,c\in\R^2$, the set $\conv\{a,b,c\}$ denotes the convex envelope of the points $a$, $b$, $c$, i.e., the (closed) triangle with vertices at $a$, $b$, $c$\,.
For every $\ep>0$ we denote by $\T_\ep$ the family of the triangles  $T_\ep$ of the form $i+\ep T^{\pm}$, with $i\in\ep\Tl$\,.

Let $\Om\subset\R^2$ be a bounded open set with Lipschitz continuous boundary, representing the domain of definition of the relevant fields in the model.
For every $\ep>0$, we set
$$
\T_\ep(\Omega):=\{T_\ep\in\T_\ep\,:\,T_\ep\subset\Omega\}
$$ 
and 
we define
 $\Omega_{\ep}:=\bigcup_{T_\ep\in\T_\ep(\Omega)}T_\ep$. Moreover, we set $\Omega_\ep^0:=\Omega_\ep\cap \ep\Tl$ and we denote by $\Omega_{\ep}^1$ the family of nearest neighbor bonds in $\Omega_\ep$, i.e., $\Omega_{\ep}^1:=\{(i,j)\in\Omega_\ep^0\times \Omega_\ep^0\,:\,|i-j|=\ep\}$. Trivially, $(i,j)\in\Omega_\ep^1$ if and only if $(j,i)\in\Omega_\ep^1$\,.
 
In the following we will generalize the notation introduced above to general subsets of $\R^2$ (not necessarily open). In particular, for every triangle $T_\ep\in\T_\ep$\,, we have
\begin{equation*}
(T_\ep)_\ep^1=\{(i,j)\in (T_\ep\cap \ep\Tl)\times (T_\ep\cap\ep\Tl)\,:\,i\neq j\}\,.
\end{equation*}
For every map $V:(T_\ep)_\ep^1\to\R^2$\,, we define the {\it discrete circulation} of $V$ on the ``boundary of $T_\ep$'' as
\begin{equation}\label{circudisc}
\ud V(T_\ep):=V(i,j)+V(j,k)+V(k,i)\,,
\end{equation}
where $(i,j,k)$ is a triple of  counter-clockwise oriented vertices of $T_\ep$\,.
\bigskip
\paragraph{\bf Displacement, plastic slip, dislocation measure.}
We denote by $\AD_\ep(\Omega)$ the class of displacements $u:\Omega_\ep^0\to \R^2$; moreover, for every $u\in\AD_\ep(\Omega)$ we define the {\it discrete gradient} $\ud u:\Omega_\ep^1\to \R^2$  of $u$ as $\ud u(i,j):=u(j)-u(i)$ for every $(i,j)\in \Omega_\ep^1$. 

We define the class of {\it plastic slips} $\AS_\ep(\Omega)$ as
\begin{equation*}
\AS_\ep(\Omega):=\Big\{\sigma:\Omega_{\ep}^1\to\ep\Tl\,:\,\sigma(i,j)=-\sigma(j,i)\textrm{ for every }(i,j)\in\Omega_\ep^1\Big\}\,.
\end{equation*}
For every $\sigma\in\AS_\ep(\Omega)$ we define the discrete dislocation measure associated to $\sigma$ as
 $$
 \mu[\sigma]:=-\sum_{T_\ep\in \T_\ep(\Omega)}\ud\sigma(T_\ep)\delta_{x_{T_\ep}}\,, 
 $$ 
 where $\ud\sigma$ is defined in \eqref{circudisc} and $x_{T_\ep}$ denotes the barycenter of the triangle $T_\ep$\,.
 Notice that for every $u\in\AD_\ep(\Omega)$ and for every $\sigma\in\AS_\ep(\Omega)$ it holds
 $$
 \mu[\sigma]=\sum_{T_\ep\in \T_\ep(\Omega)}\ud(\ud u-\sigma)(T_\ep)\delta_{x_{T_\ep}}\,.
 $$
 
The class of {\it admissible dislocation measures}, denoted by  $X_{\ep}(\Omega)$, is the family of measures of the form $\mu=\sum_{T_\ep\in \T_\ep(\Omega)}b(T_\ep)\delta_{x_{T_\ep}}$ with $b(T_\ep)\in \ep\Tl$ and satisfying the following {\it mild separation} property:
\begin{align}\label{MSass}\tag{{MS}}
&\textrm{for every $T_\ep\in\T_\ep(\Omega)$ with $\mu(T_\ep)\neq 0$, we have }\\
\nonumber
&\textrm{$\partial T_\ep \cap \partial \Omega_\ep = \emptyset$ and $\mu(T'_\ep)=0$ for every $T'_\ep\in\T_\ep(\Omega)$ with $\partial T'_\ep \cap \partial T_\ep \neq \emptyset$.}
\end{align}
Finally, 
we set
 $$
 X(\Omega):=\Big\{\mu=\sum_{k=1}^{K}b^k\delta_{x^k}\,:\,K\in\N\,,\, b^k\in\Tl\,,\, x^k\in\Omega \Big\}\,.
 $$
 
\bigskip
\paragraph{\bf The energy functional and the main result.}
We are now in a position to define the energy functionals $F_\ep:\AD_\ep(\Omega)\times\AS_{\ep}(\Omega)\to [0,+\infty)$ as
\begin{equation}\label{def:en}
F_\ep(u,\sigma):=\frac {1}{2\ep^2}\sum_{(i,j)\in\Omega_\ep^1}[(\ud u(i,j)-\sigma(i,j))\cdot (j-i)]^2\,,
\end{equation}
where $\cdot$ denotes the standard scalar product in $\R^2$\,.

We will consider also localized versions of the functional $F_\ep(u,\sigma)$ in \eqref{def:en}. More specifically, for every set $A\subset\R^2$\,, we define $F_\ep(\cdot,\cdot;A):\AD_\ep(A)\times\AS_{\ep}(A)\to [0,+\infty)$ as
\begin{equation}\label{enloc}
F_\ep(u,\sigma;A):=\frac {1}{2\ep^2}\sum_{(i,j)\in A_\ep^1}[(\ud u(i,j)-\sigma(i,j))\cdot (j-i)]^2\,,
\end{equation}
so that $F_\ep(u,\sigma;\Omega)=F_\ep(u,\sigma)$\,.

Since in our analysis the relevant parameter is  the dislocation measure $\mu$ associated to $\sigma$, we let the energy functionals depend only on (the admissible measures) $\mu$, by defining $\F_\ep:X_\ep(\Omega)\to [0,+\infty)$ as 
\begin{equation*}
 \F_\ep(\mu):=\inf_{\newatop{(u,\sigma)\in\AD_\ep(\Omega)\times\AS_{\ep}(\Omega)}{\mu[\sigma]=\mu}}F_\ep(u,\sigma)\,.
 \end{equation*}
For every $b\in\Tl$, we set
\begin{equation}\label{ffi}
\ffi(b):= \frac{1}{3\pi}\min\bigg\{\sum_{i=1}^3|z_i|\,:\, z_1,z_2,z_3\in\Z\,,\quad b=z_1e_1+z_2\nu+z_3\eta\bigg\}\,.
\end{equation}
 
Our main result is the following theorem.
\begin{theorem}\label{mainthm}
The following $\Gamma$-convergence result holds true.
\begin{itemize} 
\item[(i)] (Compactness) Let $\{\mu_\ep\}_\ep\subset \M(\R^2;\R^2)$ be such that $\mu_\ep\in X_\ep(\Omega)$ for every $\ep>0$\,. If $\F_\ep(\mu_\ep)\le C\ep^2|\log\ep|$\,, then, up to a subsequence, $\frac{\mu_\ep}{\ep}\fla\mu$ (as $\ep\to 0$) for some $\mu\in X(\Omega)$\,.
\item[(ii)] ($\Gamma$-liminf inequality) For every $\mu=\sum_{k=1}^{K}b^k\delta_{x^k}\in X(\Omega)$ and for every $\{\mu_\ep\}_\ep\subset\M(\R^2;\R^2)$ with $\mu_\ep\in X_\ep(\Omega)$ for every $\ep>0$ and such that $\frac{\mu_\ep}{\ep}\fla\mu$ (as $\ep\to 0$) it holds
\begin{equation}\label{form:liminf}
 \liminf_{\ep\to 0}\frac{\F_\ep(\mu_\ep)}{\ep^2|\log\ep|}\ge \frac{\sqrt 3}{2}\sum_{k=1}^K\ffi(b^k) \,.
\end{equation} 
\item[(iii)] ($\Gamma$-limsup inequality) For every $\mu=\sum_{k=1}^{K}b^k\delta_{x^k}\in X(\Omega)$ there exists $\{\mu_\ep\}_\ep\subset\M(\R^2;\R^2)$ such that $\mu_\ep\in X_\ep(\Omega)$ for every $\ep>0$\,, $\frac{\mu_\ep}{\ep}\fla\mu$ (as $\ep\to 0$) and
\begin{equation}\label{form:limsup}
\limsup_{\ep\to 0}\frac{\F_\ep(\mu_\ep)}{\ep^2|\log\ep|}\le \frac{\sqrt 3}{2}\sum_{k=1}^K\ffi(b^k)\,.
\end{equation}
\end{itemize} 
\end{theorem}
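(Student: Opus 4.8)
The plan is to recast the discrete problem as a semi-discrete one, so that the analysis of \cite{GLP,DGP} and the ball construction of \cite{SS2} apply. Given $(u,\sigma)$ with $\mu[\sigma]=\mu_\ep$, I interpolate $u$ affinely on each triangle of $\T_\ep(\Omega)$ and introduce the rescaled strain field $\beta_\ep$ (normalised so that $\mathrm{Curl}\,\beta_\ep=\mu_\ep/\ep$ as a measure), whose value on a triangle records the elastic part $\ud u-\sigma$ along the three bond directions. Writing $W(e):=|e|^2+\tfrac12(\mathrm{tr}\,e)^2$ for the isotropic density with Lam\'e constants equal to $1$, a direct expansion gives
\[
\frac{1}{\ep^2}F_\ep(u,\sigma)\approx\frac{\sqrt3}{2}\int_\Omega W\big(\beta_\ep^{\sym}\big)\,\ud x,
\]
where the prefactor $\tfrac{\sqrt3}{2}$ combines the area $\tfrac{\sqrt3}{2}\ep^2$ of the fundamental cell with the algebraic identity $\sum_{t\in\{e_1,\nu,\eta\}}(e:t\otimes t)^2=\tfrac34\,W(e)$. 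Since $F_\ep$ controls only the symmetric part of $\beta_\ep$, I exploit the mild separation assumption \eqref{MSass}: it forces every dislocated triangle to be surrounded by an annulus of dislocation-free triangles, on which Korn's inequality promotes the control of $\beta_\ep^{\sym}$ to control of the full field $\beta_\ep$ up to an infinitesimal rotation.

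The lower bound (ii) and the compactness (i) then follow from a ball construction performed on $\beta_\ep$. Around the support of $\mu_\ep/\ep$ one builds disjoint growing annuli on which $\beta_\ep$ is curl-free; an annulus enclosing a net (rescaled) Burgers vector $b\in\Tl$ carries energy at least $\tfrac{\sqrt3}{2}\,\psi(b)\,\log(R/r)$, where $\psi$ is the self-energy of the continuum density $W$. For this isotropic $W$ (Poisson ratio $\tfrac14$) one has $\psi(b)=\tfrac{1}{3\pi}|b|^2$; however, since admissible Burgers vectors lie in $\Tl$ and a dislocation may lower its energy by splitting into elementary dislocations along the primitive directions $e_1,\nu,\eta$, the effective quantity is the relaxed self-energy, i.e.\ the lattice-subadditive envelope $\min\{\tfrac{1}{3\pi}\sum_l|b^{(l)}|^2:\sum_l b^{(l)}=b,\ b^{(l)}\in\Tl\}$. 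Using that each primitive vector has unit length and that $|c|^2\ge N(c):=\min\{\sum_i|z_i|:c=z_1e_1+z_2\nu+z_3\eta\}$ for every $c\in\Tl$, this envelope is identified with $\tfrac{1}{3\pi}N(b)=\ffi(b)$. Summing the logarithmic contributions from scale $\ep$ to a fixed macroscopic scale yields \eqref{form:liminf}. Compactness follows at once: every non-trivial atom costs at least an amount of order $\ep^2|\log\ep|$, so the bound $\F_\ep(\mu_\ep)\le C\ep^2|\log\ep|$ keeps the total variation of $\mu_\ep/\ep$ bounded, and a standard argument in the flat topology extracts a limit $\mu\in X(\Omega)$.

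For the upper bound (iii) I build a recovery sequence by the reverse splitting. For each atom $b^k\delta_{x^k}$ I fix a decomposition $b^k=z_1^ke_1+z_2^k\nu+z_3^k\eta$ attaining the minimum in \eqref{ffi} and I place $\sum_i|z_i^k|$ elementary dislocations near $x^k$, each carrying a primitive Burgers vector, at mutual distances $d_\ep$ with $\ep\ll d_\ep\to0$ and $\log d_\ep=\mathrm{o}(|\log\ep|)$; this secures both $\mu_\ep/\ep\fla\mu$ and the negligibility, at the scale $|\log\ep|$, of the interaction and outer-scale energies. The slip $\sigma$ is obtained by letting it jump by the relevant primitive vector across a discrete cut joining each core to $\partial\Omega_\ep$, and $u$ is the associated discrete elastic minimiser; comparing with the explicit continuum dislocation fields, the self-energies add up to $\tfrac{\sqrt3}{2}\ffi(b^k)$ at leading order, proving \eqref{form:limsup}.

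The main obstacle is the lower bound, where two points are delicate. First, the discrete-to-continuum passage is meaningful only where the lattice is elastically deformed: without \eqref{MSass} one cannot apply Korn's inequality triangle by triangle, and indeed (cf.\ Remark~\ref{counter-MS}) pathological zero-energy configurations exist; the separation assumption is exactly what furnishes the dislocation-free annuli on which the ball construction and Korn's inequality operate. Second, one must show that the ball construction returns the \emph{relaxed} self-energy $\ffi(b)$ and not the naive $\tfrac{1}{3\pi}|b|^2$: since $\ffi(b)\le\tfrac{1}{3\pi}|b|^2$, the bound cannot be read off from the net flux alone, and one has to track, across the hierarchy of balls, the cheapest lattice-admissible way of carrying the Burgers vector—precisely the splitting realised in the upper bound.
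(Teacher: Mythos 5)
Your overall architecture is essentially the paper's: interpolate $(u,\sigma)$ affinely on dislocation-free triangles to obtain a piecewise constant strain with the correct circulations (the paper's Lemma \ref{poincare}; your prefactor identity is Remark \ref{disccont}), use \eqref{MSass} together with Korn's inequality on $\ep$-scale annuli around each dislocated triangle, and hand the problem over to the semi-discrete analysis of \cite{GLP,DGP}. The paper, however, does \emph{not} re-run the ball construction at the discrete level: it simply verifies that the rescaled interpolated strain $\tilde\beta_\ep$ lies in $\as_\ep(\tilde\mu_\ep)$ and quotes the continuum Theorem \ref{thmcont}, where the point that worries you in the liminf --- that the construction returns the relaxed self-energy $\ffi$ rather than $\frac{1}{3\pi}|b|^2$ --- is already packaged into $\ffi^{\C}$. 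For the limsup your construction coincides with the paper's (restriction to the lattice of the explicit continuum fields \eqref{betapiano}--\eqref{uottima}, slip supported on bonds crossing a half-line cut); the only deviation is that you split $b^k$ into primitive dislocations at a mesoscale $d_\ep$ with $\ep\ll d_\ep\to 0$, whereas the paper splits at a fixed continuum scale and reduces to $|b^k|=1$ by a standard density argument in $\Gamma$-convergence. Both variants work.

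The genuine gap is in your compactness paragraph. The claim that ``every non-trivial atom costs at least an amount of order $\ep^2|\log\ep|$'', so that the total variation of $\mu_\ep/\ep$ stays \emph{bounded}, is false: under \eqref{MSass} a dipole of opposite unit dislocations at distance of order $\ep$ costs only $O(\ep^2)$ (the paper states this explicitly in the introduction). The correct per-atom estimate --- the paper's Lemma \ref{lm:totvar}, proved by exactly the Korn-on-annuli mechanism you describe --- is a core energy of order $\ep^2$ per unit of mass, which under the energy bound yields only $|\mu_\ep/\ep|(\Omega)\le C|\log\ep|$. With possibly diverging total variation, weak-$*$ compactness fails, and extracting a flat limit in $X(\Omega)$ is not a ``standard argument'': it is precisely the hard compactness statement of \cite{DGP}, quoted here as Theorem \ref{thmcont}(i) and proved there via the ball construction, showing that the logarithmically large part of the mass cancels in the flat topology while a finite sum of $\Tl$-valued Dirac masses survives. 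Note also that the same bound $|\mu_\ep/\ep|(\Omega)\le C|\log\ep|$ is a hypothesis of the continuum liminf you invoke, so your argument needs it there too; your write-up has all the ingredients (Korn plus \eqref{MSass}) but never draws this correct quantitative conclusion. A minor omission in the same spirit: since $\Omega_\ep\subsetneq\Omega$, the interpolated strain must be extended across $\Omega\setminus\Omega_\ep$ (by reflection, using that \eqref{MSass} keeps the triangles touching $\partial\Omega_\ep$ dislocation-free --- the paper's Lemma \ref{lm:ext}) before the continuum statements on $\Omega$ can be applied.
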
 
The convergence appearing in Theorem \ref{mainthm} is the {\it flat convergence}\,, that is the convergence with respect to the {\it flat norm}, defined by
\begin{equation}\label{defflat}
\|\mu\|_{\flt}:=\sup_{\newatop{\phi\in C^{0,1}_{\mathrm{c}}(\Omega)}{\|\phi\|_{C^{0,1}}\le 1}}\Big|\int_{\Omega}\phi\,\ud\mu\Big|\,,\qquad\textrm{for every }\mu\in\M(\R^2;\R^2)\,,
\end{equation}
where $C^{0,1}(\Omega)$ is the space of Lipschitz continuous functions endowed with the norm
$$
\|\phi\|_{C^{0,1}}:=\sup_{x\in\Omega}|\phi(x)|+\sup_{\newatop{x,y\in\Omega}{x\neq y}}\frac{|\phi(x)-\phi(y)|}{|x-y|}\,,
$$
 and $C^{0,1}_{\mathrm{c}}(\Omega)$ is the subspace of $C^{0,1}$ functions compactly supported in $\Omega$\,.
\begin{remark}\label{linearizzazione}
We illustrate how to formally derive \eqref{def:en} from a nonlinear frame invariant model with nearest neighbor pairwise interaction potentials, in the case where no dislocation is present, i.e., $\sigma\equiv0$.  
We assume that, given a deformation  $v\colon \Om_\e^0 \to \R^2$, the total interaction energy is
\[
E_\e(v)=\e^2 \sum_{(i,j)\in\Omega_\ep^1}\psi \Big( \Big|\frac{\ud v(i,j)}{\e}\Big|\Big)\,,
\]
\noindent
where $\psi$ is a $C^2$ function such that $\psi(1)=\psi'(1)=0$ and $\psi''(1)>0$\,, so that the identity is an equilibrium configuration. Expressing the energy in terms of the displacement 
$u(x)=\frac{v(x)- x}{\delta}$, with $\delta>0$, and rescaling by $\delta^2$, the total 
interaction energy reads as  
\[
E^\delta_\e(x+\delta u)=\frac{\e^2}{\delta^2} \sum_{(i,j)\in\Omega_\ep^1}\psi \Big( \Big|\frac{j-i}{\e}+\delta\, \frac{\ud u(i,j)}{\e}\Big|\Big)\,.
\]
A second order Taylor expansion of $E^\delta_\e(x+\delta u)$ with respect to $\delta$ about
the point $\delta=0$ gives
$$
E_\e^\delta(x+\delta u)= F_\e(u)+\mathrm{o}_{\delta}(1)\,,
$$
where
\begin{equation*}
F_\e(u)=\frac{1}{2\e^2}\psi''(1)\sum_{(i,j)\in\Omega_\ep^1}[\ud u(i,j)\cdot (j-i)]^2\,,
\end{equation*}
which coincides with \eqref{def:en} up to a multiplicative constant when $\sigma\equiv0$.
We refer to \cite{GT} for a formal derivation of \eqref{def:en} starting from a nonlinear model with discrete plastic slips. 
\end{remark}
\section{The continuum case}
In order to prove Theorem \ref{mainthm} we will use the corresponding result in the continuum setting \cite{DGP}, that we briefly recall here.
 Let $A$ be an open bounded subset of $\R^2$ with Lipschitz continuous boundary. For every $\mu\in X(A)$ we set \begin{equation}\label{dombuc}
 A_\ep(\mu):=A\setminus\bigcup_{x\in \supp(\mu)}\overline{B}_\ep(x)
 \end{equation}
  and we define the set $\as_\ep(\mu)$ of admissible strains associated to $\mu$ as
\begin{equation}\label{def:admstr}
\begin{aligned}
\as_\ep(\mu):=&\Big\{\beta\in L^2(A_\ep(\mu);\R^{2\times 2})\,:\,\mathrm{Curl}\,\beta=0\,\textrm{ in }A_\ep(\mu)\,,\\
&\int_{\partial U}\beta\,\tau\,\ud\mathcal{H}^1=\mu(U)\textrm{ for every open set }U\subset A
\textrm{ with }\partial U\subset A_\ep(\mu)\textrm{ smooth}\Big\}\,.
\end{aligned}
\end{equation}

Let $\C$ be an elasticity tensor, i.e., a linear operator from $\R^{2\times 2}$ into itself satisfying the following property: There exist two constants $0<c_1<c_2$ such that
\begin{equation*}
c_1|\beta^{\sym}|^2\le \frac 1 2 \C\beta:\beta\le c_2|\beta^{\sym}|^2\qquad\textrm{for every }\beta\in\R^{2\times 2}\,,
\end{equation*}
where $\beta^{\sym}:=\frac 1 2(\beta+\beta^*)$\,.

The elastic energy of a field $\beta\in\as_\ep(\mu)$ in the body $A$ is given by
\begin{equation*}
E_\ep(\beta;A_\ep(\mu)):=\frac{1}{2}\int_{A_\ep(\mu)}\C\beta:\beta\,\ud x\,,
\end{equation*}
and the energy induced by the dislocation distribution $\mu$ in the body $A$
is defined by 
\begin{equation}\label{enemin}
\E_\ep(\mu;A):=\inf_{\beta\in\as_\ep(\mu)}E_\ep(\beta;A_\ep(\mu))+|\mu|(A)\,,\qquad\textrm{for every }\mu\in X(A) \,.
\end{equation}
In \eqref{enemin}\,, the first addendum on the right hand side is the elastic energy induced by the dislocation measure $\mu$\,, whereas the second addendum plays the role of a plastic core energy.

In order to introduce the self-energy of an edge dislocation, for every $b\in\Tl$ we first define the strain field $\beta_{\R^2}^{b,\C}$ satisfying the circulation condition
\begin{equation*}
\mathrm{Curl}\,\beta=b\delta_{0}\textrm{ in }\R^2
\end{equation*}
and the equilibrium equation
\begin{equation*}
\mathrm{Div}\,\C\beta=0\textrm{ in }\R^2\,.
\end{equation*} 
As shown in \cite{DGP}, the strain field $\beta_{\R^2}^{b,\C}$ is given, in polar coordinates, by
\begin{equation}\label{betapiano}
\beta^{b,\C}_{\R^2}(\rho,\theta):=\frac{1}{\rho}\big(f^{b,\C}(\theta)\otimes(-\sin\theta,\cos\theta)+g^{b,\C}\otimes(\cos\theta,\sin\theta)\big)\,, 
\end{equation}
where  the constant $g^{b,\C}\in\R^2$ and the function $f^{b,\C}\in C^0([0,2\pi];\R^2)$\,, with $f(0)=f(2\pi)$ and $\int_{0}^{2\pi}f^{b,\C}(\omega)\,\ud\omega=b$\,, are uniquely determined by the vector $b$ and the tensor $\C$\,.

The corresponding displacement $u^{b,\C}_{\R^2}$ (i.e., such that $\nabla u^{b,\C}_{\R^2}=\beta^{b,\C}_{\R^2}$)
is computed explicitly in the literature (see, for instance, \cite[formula 4.1.25]{BBS}) and, in polar coordinates,
is given by
\begin{equation}\label{uottima}
u^{b,\C}_{\R^2}(\rho,\theta)=F^{b,\C}(\theta)+g^{b,\C}\log\rho\,,
\end{equation}
where  $F^{b,\C}(\theta)=\int_{0}^{\theta}f^{b,\C}(\omega)\,\ud\omega$\,, for $\theta\in[0,2\pi)$.
Note that the displacement above is uniquely determined up to a constant.

For every $b\in\Tl$ we set 
\begin{equation}\label{servealimsup}
\psi^{\C}(b):=\int_{0}^{2\pi}\frac{1}{2}\C\,\Gamma^{b,\C}(\theta):\Gamma^{b,\C}(\theta)\,\ud\theta
=\frac{1}{|\log r|}\int_{B_1\setminus\overline{B}_r}\frac{1}{2}\C\beta^{b,\C}_{\R^2}:\beta^{b,\C}_{\R^2}\,\ud x\,,\qquad  0<r<1\,,
\end{equation}
where we have set $\Gamma^{b,\C}(\theta):=\big(f^{b,\C}(\theta)\otimes(-\sin\theta,\cos\theta)+g^{b,\C}\otimes(\cos\theta,\sin\theta)\big)$ for every $\theta\in [0,2\pi]$\,.
Finally, for every $b\in\Tl$ we define 
\begin{equation}\label{generffi}
\ffi^{\C}(b):=\min\bigg\{\sum_{i=1}^{N}|z_i|\psi^{\C}(b_i)\,:\, z_i\in\Z\,,\, b_i\in\Tl\,,\,N\in\N\,,\,\sum_{i=1}^{N}z_ib_i=b\bigg\}\,.
\end{equation}
The following result is a slight variant of \cite[Theorem 4]{DGP}\,.
\begin{theorem}\label{thmcont}
The following $\Gamma$-convergence result holds true.
\begin{itemize}
\item[(i)] (Compactness) 
 Let $\{\mu_\ep\}_{\ep}\subset X(A)$ and let $\{\beta_\ep\}_{\ep}$ be a sequence of fields with $\beta_\ep\in\as_\ep(\mu_\ep)$ (for every $\ep>0$) such that
\begin{equation}\label{unifbound}
E_\ep(\beta_\ep;A_\ep(\mu_\ep))+|\mu_\ep|(A)\le M|\log\ep|\,;
\end{equation}
then, up to a subsequence, $\mu_\ep\fla\mu$ (as $\ep\to 0$) for some $\mu\in X(A)$\,.
\item[(ii)] ($\Gamma$-liminf inequality) For every $\mu=\sum_{k=1}^Kb^k\delta_{x^k}\in X(A)$\,, for every $\{\mu_\ep\}_{\ep}\subset X(A)$ with $\mu_\ep\fla\mu$ (as $\ep\to 0$) and $|\mu_\ep|(A)\le C|\log\ep|$ and for every sequence of fields $\{\beta_\ep\}_{\ep}$ with $\beta_\ep\in\as_\ep(\mu_\ep)$ (for every $\ep>0$)  it holds
\begin{equation}\label{lbcont}
\liminf_{\ep\to 0}\frac{E_\ep(\beta_\ep;A_\ep(\mu_\ep))}{|\log\ep|}\ge\sum_{k=1}^K\ffi^{\C}(b^k)\,;
\end{equation}
\item[(iii)] ($\Gamma$-limsup inequality) For every $\mu=\sum_{k=1}^Kb^k\delta_{x^k}\in X(A)$ there exists a sequence of measures $\{\mu_\ep\}_{\ep}\subset X(A)$ such that $\mu_\ep\fla\mu$ (as $\ep\to 0$) and
\begin{equation}
\limsup_{\ep\to 0}\frac{\E_\ep(\mu_\ep;A)}{|\log\ep|}\le \sum_{k=1}^K\ffi^{\C}(b^k)\,. 
\end{equation}
\end{itemize}
\end{theorem}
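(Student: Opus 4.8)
The plan is to obtain Theorem~\ref{thmcont} from \cite[Theorem~4]{DGP} by isolating the few points in which the present formulation differs and re-running the underlying arguments only where the modification is genuine; the expected differences concern the precise form of the core term $|\mu|(A)$ added in \eqref{enemin} and the normalization of the functional, neither of which affects the leading-order $|\log\ep|$ asymptotics. The engine of the original proof is, for the lower estimates, the \emph{ball construction} of \cite{SS2}, which redistributes the elastic energy $\tfrac12\int\C\beta:\beta$ into a family of disjoint annuli surrounding the singularities and bounds it from below on each annulus by the self-energy density $\psi^{\C}$ of \eqref{servealimsup}; and, for the upper estimate, the explicit insertion near each limiting point of the canonical singular strains $\beta^{b,\C}_{\R^2}$ of \eqref{betapiano}, suitably cut off. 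Since $\psi^{\C}$ is a positive-definite quadratic self-energy bounded below by a positive constant on $\Tl\setminus\{0\}$ (a consequence of the coercivity of $\C$), these two ingredients drive all three items.

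For the compactness statement (i), I would argue that the bound \eqref{unifbound}, via the ball construction, controls the number of singularities carrying a nonzero \emph{net} Burgers vector: each such cluster forces an energy contribution of order $\psi^{\C}(b)\,|\log\ep|\gtrsim|\log\ep|$, so that their number stays bounded as $\ep\to0$. The remaining charge organizes into dipoles and clusters of vanishing net Burgers vector at scale $o(1)$, which are negligible for the flat norm \eqref{defflat}. Combined with the quantization $b^k\in\Tl$ and the bound $|\mu_\ep|(A)\le M|\log\ep|$, a standard compactness argument for atomic measures then extracts a subsequence with $\mu_\ep\fla\mu$ for some $\mu\in X(A)$.

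For the $\Gamma$-liminf inequality (ii), the ball construction localizes the energy around the limiting points $x^k$; the delicate point, which I expect to be the main obstacle, is to recover on each localized region the \emph{relaxed} self-energy $\ffi^{\C}(b^k)$ rather than merely $\psi^{\C}(b^k)$. This is achieved by allowing the lower-bound construction to resolve a single dislocation of Burgers vector $b^k$ into a cluster of nearby sub-dislocations of Burgers vectors $b_i$ with integer multiplicities $z_i$, $\sum_i z_ib_i=b^k$, and by optimizing over all such decompositions; the resulting infimum is exactly the definition \eqref{generffi} of $\ffi^{\C}$. Ensuring that the ball construction is compatible with arbitrary admissible decompositions, so that no configuration can undercut $\ffi^{\C}(b^k)$, is where the quadratic structure of $\psi^{\C}$ and the careful annular localization are essential, and this yields \eqref{lbcont}.

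For the $\Gamma$-limsup inequality (iii), I would realize the optimal decomposition directly: for each $x^k$ fix $b^k=\sum_i z_ib_i$ attaining the minimum in \eqref{generffi}, place $|z_i|$ dislocations of Burgers vector $\pm b_i$ near $x^k$, and build the competitor $\beta_\ep$ by superposing the fields $\beta^{b_i,\C}_{\R^2}$ cut off away from the cores. The elastic energy of each contributes $\psi^{\C}(b_i)\,|\log\ep|$ up to lower-order terms, the core term $|\mu_\ep|(A)$ is $O(1)$, and the associated measures satisfy $\mu_\ep\fla\mu$, so that $\limsup_{\ep\to0}\E_\ep(\mu_\ep;A)/|\log\ep|\le\sum_{k=1}^{K}\ffi^{\C}(b^k)$, as required.
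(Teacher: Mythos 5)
Your proposal is correct and takes essentially the same route as the paper, which proves Theorem~\ref{thmcont} by arguing verbatim as in \cite[Theorem 4]{DGP}; your sketch of the ball-construction compactness and lower bound, the relaxation to $\ffi^{\C}$ via decompositions $\sum_i z_i b_i=b^k$, and the recovery sequence obtained by superposing cut-off fields $\beta^{b_i,\C}_{\R^2}$ is an accurate summary of that underlying proof. The paper's only additional remarks — that DGP's normalization $\int_{A_\ep(\mu)}(\beta-\beta^*)\,\ud x=0$ on admissible strains can always be enforced by invariance of the energy, that compactness is here needed for arbitrary (not only optimal) $\beta_\ep$, and that the liminf for $E_\ep$ alone follows from the same proof once $|\mu_\ep|(A)\le C|\log\ep|$ is assumed as a hypothesis — are minor variants that your discussion of the core term and normalization covers in substance.
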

The proof of Theorem \ref{thmcont} can be obtained by arguing verbatim as in the proof of \cite[Theorem 4]{DGP}. In fact, the minor differences between Theorem \ref{thmcont} and \cite[Theorem 4]{DGP} are the following.
First, in \cite{DGP} the admissible strains $\beta$ should satisfy the condition 
\begin{equation*}
\int_{A_\ep(\mu)}(\beta-\beta^*)\,\ud x=0\,.
\end{equation*}
Such a condition can be always enforced in view of the invariance of the elastic energy with respect to translations. 
Second, in \cite[Theorem 4]{DGP} the compactness property is stated enforcing \eqref{unifbound} only for the optimal $\beta_\ep$.
Third, in \cite[Theorem 4]{DGP}, the lower bound \eqref{lbcont} is provided for the functional $\E_\ep$; there, the assumption $|\mu_\ep|(A)\le C|\log\ep|$ is automatically satisfied by sequences with equibounded energy. However, assuming $|\mu_\ep|(A)\le C|\log\ep|$\,, the same proof provides the same lower bound for $\frac{1}{|\log\ep|}E_\ep(\beta_\ep;A_\ep(\mu_\ep))$\,.
 
Now, we specialize the functions $\psi^{\C}$ and $\ffi^{\C}$ to the particular isotropic case we deal with in the discrete-to-continuum limit.
More specifically, let $\C$ be the isotropic elasticity tensor with Lam\'e parameters both equal to $1$\,, 
i.e.,
\begin{equation}\label{isotensor}
\C \beta:\beta:=|\mathrm{tr}\,\beta|^2+2 |\beta^{\sym}|^2\,,
\end{equation}
By \cite[formulas (3.3) and (3.4)]{CL}, for the specific choice of $\C$ in \eqref{isotensor} we have that the constant $g^{b,\C}$ and the function $f^{b,\C}$ in \eqref{betapiano} are given by
\begin{equation}
\begin{aligned}\label{gfottime}
g^{b,\C}=\ &-\frac{1}{6\pi} b^{\perp}\,,\\
f^{b,\C}(\theta)=\ &\frac{1}{2\pi}b-\frac{1}{3\pi}(-b_1\cos(2\theta)-b_2\sin(2\theta), b_2\cos(2\theta)-b_1\sin(2\theta))\,;
\end{aligned}
\end{equation}
therefore, by straightforward computations, in this case
\begin{equation*}
\psi(b):=\psi^{\C}(b)=\frac{1}{3\pi}|b|^2\,,\qquad b\in\Tl\,,
\end{equation*}
and hence, by \eqref{generffi}, 
\begin{equation}\label{specffi}
\begin{aligned}
\ffi(b)=\ffi^{\C}(b)=\ &\frac{1}{3\pi}\min\bigg\{\sum_{i=1}^N|z_i|\,|b_i|^2\,:\, z_i\in\Z\,,\, b_i\in\Tl\,,\,N\in\N\,,\,\sum_{i=1}^{N}z_ib_i=b\bigg\}\\
=\ &\frac{1}{3\pi}\min\bigg\{\sum_{i=1}^3|z_i|\,:\, z_1,z_2,z_3\in\Z\,,\quad b=z_1e_1+z_2\nu+z_3\eta\bigg\}\,,
\end{aligned}
\end{equation}
which is exactly \eqref{ffi}.
\section{Proof of the main result}
This section is devoted to the proof of Theorem \ref{mainthm}. In Subsection \ref{preli} below we collect some auxiliary results that will be instrumental to prove Theorem \ref{mainthm}.
\subsection{Preliminary results}\label{preli}
We start by deriving the continuum isotropic elasticity tensor associated to our discrete functional.
\begin{remark}\label{disccont}
Let $\ep>0$ and let $T_\ep\in\T_\ep$\,. Let moreover $u:T_\ep\cap\ep\Tl\to \R^2$ and $\sigma: (T_\ep)_\ep^1\to\ep\Tl$ be such that $\sigma(i,j)=-\sigma(j,i)$ for every $(i,j)\in (T_\ep)_\ep^1$\,.
If $\beta\in\R^{2\times 2}$ satisfies 
\begin{equation*}
\beta\,(j-i)=\ud u(i,j)-\sigma(i,j)\qquad\textrm{for every }(i,j)\in (T_\ep)_\ep^1,
\end{equation*}
then, by straightforward computations, we have that 
\begin{equation}\label{ok1}
\begin{aligned}
F_\ep(u,\sigma;T_\ep)
=&\ep^2\Big(|e_1^*\,\beta\,e_1|^2+|\nu^*\,\beta\,\nu|^2+|\eta^*\,\beta\,\eta|^2\Big)
= \ep^2 \frac{3}{8}\C\beta:\beta
=\frac{\sqrt 3}{2}\int_{T_\ep}\C\beta:\beta\,\ud x\,,
\end{aligned}
\end{equation}
where $F_\ep(\cdot,\cdot;T_\ep)$ is defined in \eqref{enloc} and $\C$ is given in \eqref{isotensor}.
\end{remark}
In the next lemma we construct, far from the singularities, a (continuous) strain field $\beta$ that is compatible with a given distribution of dislocations and whose (continuous) energy behaves like the discrete energy $F_\ep$\,. Such an estimate, together with a bound on the total variation, 
will allow us to deduce the $\Gamma$-liminf inequality in Theorem \ref{mainthm}(ii) directly from the analogous statement in the continuous setting (Theorem \ref{thmcont}(ii)).
\begin{lemma}\label{poincare}
Let $\ep>0$ and let $(u,\sigma)\in\AD_\ep(\Omega)\times\AS_\ep(\Omega)$\,.
Let moreover
\begin{equation}\label{misnul}
\mathcal{K}_\ep:=\{T_\ep\in\T_\ep(\Omega)\,:\,\mu[\sigma](T_\ep)=0\}\,,\qquad K_\ep:=\bigcup_{T_\ep\in\mathcal{K}_\ep}T_\ep\,.
\end{equation}
Then, there exists a piecewise constant (namely, constant on each triangle $T_\ep\in\mathcal{K}_\ep$) field $\beta^{u,\sigma,\mathcal{K}_\ep}\in L^2(K_\ep;\R^{2\times 2})$ such that:
\begin{itemize}
\item[(a)] $\ud u(i,j)-\sigma(i,j)=\beta^{u,\sigma,\mathcal{K}_\ep}_{|_{T_\ep}}\,(j-i)$ for every $i,j\in T_\ep$ with $T_\ep\in\mathcal{K}_\ep$\,;
\item[(b)] $F_\ep(u,\sigma; K_\ep)=\frac{\sqrt 3}{2}\int_{K_\ep} \frac 1 2\C\beta^{u,\sigma,\mathcal{K}_\ep}:\beta^{u,\sigma,\mathcal{K}_\ep}\,\ud x+\frac{1}{4\ep^2}\sum_{\newatop{(i,j)\in K^1_\ep}{i,j\in\partial K_\ep}}|(\ud u(i,j)-\sigma(i,j))\cdot (j-i)|^2$\,, 
where $F_\ep$ is defined in \eqref{enloc} and $\C$ is defined in \eqref{isotensor}\,;
\item[(c)] $\mathrm{Curl}\,\beta^{u,\sigma,\mathcal{K}_\ep}=0$ in $K_\ep$ and $\int_{\partial U}\beta^{u,\sigma,\mathcal{K}_\ep}\,\tau\,\ud\mathcal{H}^1=\mu[\sigma](U)$ for every smooth open set  $U\subset\Omega$  such that $\partial U\subset K_\ep$\,.
\end{itemize}
\end{lemma}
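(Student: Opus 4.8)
The plan is to build $\beta^{u,\sigma,\mathcal{K}_\ep}$ one triangle at a time and then verify the three assertions separately. For (a), I would fix $T_\ep\in\mathcal{K}_\ep$ with counter-clockwise vertices $(i,j,k)$ and look for a matrix $\beta_{T_\ep}$ with $\beta_{T_\ep}(j-i)=\ud u(i,j)-\sigma(i,j)$ and $\beta_{T_\ep}(k-j)=\ud u(j,k)-\sigma(j,k)$. Since $j-i$ and $k-j$ are linearly independent, such a $\beta_{T_\ep}$ exists and is unique; the key point is that it is automatically consistent on the third edge. Indeed, writing $w_{pq}:=\ud u(p,q)-\sigma(p,q)$ and using that the three edge vectors sum to zero, one gets $\beta_{T_\ep}(i-k)=-w_{ij}-w_{jk}$, and this equals $w_{ki}$ precisely because $w_{ij}+w_{jk}+w_{ki}=\ud(\ud u-\sigma)(T_\ep)=\mu[\sigma](T_\ep)=0$ for $T_\ep\in\mathcal{K}_\ep$. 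Antisymmetry of $\ud u$ and $\sigma$ then gives (a) for all six ordered pairs, and declaring $\beta^{u,\sigma,\mathcal{K}_\ep}$ to equal $\beta_{T_\ep}$ on each $T_\ep$ yields the desired piecewise constant $L^2$ field. Thus the vanishing of the discrete circulation is exactly the compatibility condition that makes the construction possible.

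For (b) I would start from Remark \ref{disccont}, which gives $F_\ep(u,\sigma;T_\ep)=\frac{\sqrt3}{2}\int_{T_\ep}\C\beta^{u,\sigma,\mathcal{K}_\ep}:\beta^{u,\sigma,\mathcal{K}_\ep}\,\ud x$ on each triangle, and sum over $T_\ep\in\mathcal{K}_\ep$. The only subtlety is double counting: an interior bond of $K_\ep$ (shared by two triangles of $\mathcal{K}_\ep$) is counted twice in $\sum_{T_\ep}F_\ep(u,\sigma;T_\ep)$ but once in $F_\ep(u,\sigma;K_\ep)$, whereas a bond on $\partial K_\ep$ contributes once to each. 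Setting $c_e:=|(\ud u-\sigma)\cdot(j-i)|^2$ per bond, this bookkeeping gives $\sum_{T_\ep\in\mathcal{K}_\ep}F_\ep(u,\sigma;T_\ep)=\frac{1}{\ep^2}\big(2\sum_{e\,\mathrm{int}}c_e+\sum_{e\subset\partial K_\ep}c_e\big)$ while $F_\ep(u,\sigma;K_\ep)=\frac{1}{\ep^2}\big(\sum_{e\,\mathrm{int}}c_e+\sum_{e\subset\partial K_\ep}c_e\big)$; eliminating $\sum_{e\,\mathrm{int}}c_e$ yields $F_\ep(u,\sigma;K_\ep)=\frac12\sum_{T_\ep}F_\ep(u,\sigma;T_\ep)+\frac{1}{2\ep^2}\sum_{e\subset\partial K_\ep}c_e$, which is exactly (b) once the per-triangle energies are rewritten through Remark \ref{disccont} and the boundary sum is expressed over ordered pairs (whence the prefactor $\frac{1}{4\ep^2}$). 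I expect this edge-counting to be the most error-prone step; it is also where the isolation of dislocation cores enters, since one needs every bond of $K_\ep^1$ to belong to at least one triangle of $\mathcal{K}_\ep$ (so that $\{e\subset\partial K_\ep\}$ coincides with the bonds lying in exactly one triangle of $\mathcal{K}_\ep$), a property guaranteed by excluding neighboring dislocated triangles as in \eqref{MSass}.

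Finally, for (c) I would argue first locally and then globally. Across any interior edge shared by $T_\ep,T'_\ep\in\mathcal{K}_\ep$, property (a) forces $\beta_{T_\ep}(j-i)=w_{ij}=\beta_{T'_\ep}(j-i)$, i.e.\ the tangential component of $\beta^{u,\sigma,\mathcal{K}_\ep}$ is continuous along the edge; since a piecewise constant field carries no singular $\mathrm{Curl}$ on an interface exactly when the jump annihilates the tangent, this gives $\mathrm{Curl}\,\beta^{u,\sigma,\mathcal{K}_\ep}=0$ in $K_\ep$. For the circulation identity I would note that along any lattice bond from $i$ to $j$ one has $\int_e\beta^{u,\sigma,\mathcal{K}_\ep}\tau\,\ud\mathcal{H}^1=\beta^{u,\sigma,\mathcal{K}_\ep}(j-i)=\ud u(i,j)-\sigma(i,j)$, so the continuum circulation along a polygonal loop of bonds equals the discrete circulation of $\ud u-\sigma$; by cancellation of interior bonds the latter equals $\sum_{T_\ep:\,x_{T_\ep}\ \mathrm{enclosed}}\ud(\ud u-\sigma)(T_\ep)$, i.e.\ the $\mu[\sigma]$-mass of the enclosed cores. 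Given a smooth $U$ with $\partial U\subset K_\ep$, I would then use that $\beta^{u,\sigma,\mathcal{K}_\ep}$ is $\mathrm{Curl}$-free in $K_\ep$ to deform $\partial U$, without changing $\int_{\partial U}\beta^{u,\sigma,\mathcal{K}_\ep}\tau\,\ud\mathcal{H}^1$, into such a polygonal loop enclosing exactly the triangles with barycenter in $U$, obtaining $\int_{\partial U}\beta^{u,\sigma,\mathcal{K}_\ep}\tau\,\ud\mathcal{H}^1=\mu[\sigma](U)$. The delicate point here is that the homotopy from $\partial U$ to the polygonal loop must remain inside $K_\ep$, i.e.\ must not sweep across a dislocated triangle; as in (b), this is ensured by the cores being isolated holes of $K_\ep$.
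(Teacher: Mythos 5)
Your proposal is correct and follows essentially the same route as the paper: the per-triangle construction in (a) (your direct definition of the matrix on two independent edge vectors is just the linear-algebra form of the paper's affine interpolation of a vertex function, with the vanishing discrete circulation supplying compatibility on the third edge), the same double-counting bookkeeping via Remark \ref{disccont} in (b), and the same tangential-continuity argument for $\mathrm{Curl}\,\beta=0$ in (c), where your homotopy-to-polygonal-loop step is equivalent to the paper's direct application of Stokes' theorem on $U\cap K_\ep$, reducing $\int_{\partial U}\beta\,\tau\,\ud\mathcal{H}^1$ to $\int_{U\cap\partial K_\ep}\beta\,\tau\,\ud\mathcal{H}^1=-\sum_{T_\ep\subset U}\ud\sigma(T_\ep)$. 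Your cautionary remarks about \eqref{MSass} are reasonable (and for (b) correctly flag that bonds of $K_\ep^1$ contained in no triangle of $\mathcal{K}_\ep$ would disturb the exact identity, a case the paper passes over silently since the lemma is only ever applied under \eqref{MSass}), though for (c) the reduction to the $1$-skeleton works within $K_\ep$ even without that assumption.
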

\begin{proof}
Let $T_\ep=\mathrm{conv}\{i,j,k\}\in\mathcal{K}_\ep$ and assume that the triple $(i,j,k)$
is counterclockwise oriented. Let $v_{T_\ep}\in\AD_{\ep}(T_\ep)$ be defined by 
$$
v_{T_\ep}(i)=0\,,\quad v_{T_\ep}(j)=\ud u(i,j)-\sigma(i,j)\,,\quad v_{T_\ep}(k)=v_{T_\ep}(j)+\ud u(j,k)-\sigma(j,k)\,,
$$
and notice that the discrete gradient $\ud v_{T_\ep}$ of $v_{T_\ep}$ agrees with $\ud u-\sigma$\,.
For every $T_\ep\in\mathcal{K}_\ep$ let $\tilde v_{T_\ep}:T_\ep\to\R^2$ be the piecewise affine interpolation of $v_{T_\ep}$ and let $\beta^{u,\sigma,\mathcal{K}_\ep}: K_\ep\to \R^{2\times 2}$ be the map defined by
$$
\beta^{u,\sigma,\mathcal{K}_\ep}:=\sum_{T_\ep\in\mathcal{K}_\ep}\D \tilde v_{T_\ep}\,\chi_{T_\ep}\,,
$$
where $\D \tilde{v}_{T_\ep}$ denotes the (continuous) gradient of the function $\tilde v_{T_\ep}$\,. 
Clearly  $\beta^{u,\sigma,\mathcal{K}_\ep}\in L^2(K_\ep;\R^{2\times 2})$\,.
Moreover, by construction, we have that property (a) is satisfied; furthermore, since  for every $T_\ep\in\mathcal{K}_\ep$ and for every $(i,j)\in T_\ep^1$ it holds
\begin{equation*}
\beta^{u,\sigma,\mathcal{K}_\ep}_{|_{T_\ep}}\,(j-i)=\ud u(i,j)-\sigma(i,j)\,,
\end{equation*}
by Remark \ref{disccont} we get
\begin{equation*}
\begin{aligned}
F_\ep(u,\sigma;K_\ep)=\ &\frac 1 2\sum_{T_\ep\in\mathcal{K}_\ep}F_\ep(u,\sigma;T_\ep)+\frac{1}{4\ep^2}\sum_{\newatop{(i,j)\in K^1_\ep}{i,j\in\partial K_\ep}}|(\ud u(i,j)-\sigma(i,j))\cdot (j-i)|^2\\
=\ &\frac{\sqrt 3}{2}\int_{K_\ep} \frac 1 2\C\beta^{u,\sigma,\mathcal{K}_\ep}:\beta^{u,\sigma,\mathcal{K}_\ep}\,\ud x+\frac{1}{4\ep^2}\sum_{\newatop{(i,j)\in K^1_\ep}{i,j\in\partial K_\ep}}|(\ud u(i,j)-\sigma(i,j))\cdot (j-i)|^2\,,
\end{aligned}
\end{equation*}
i.e., property (b).

Finally, we prove that also (c) is satisfied. To this end, we notice that if $T_\ep^1$ and $T_\ep^2$ are two triangles sharing one edge $(i,j)$\,, then, by construction, $\beta^{u,\sigma,\mathcal{K}_\ep}_{|_{T_\ep^1}}\,(j-i)=\beta^{u,\sigma,\mathcal{K}_\ep}_{|_{T_\ep^2}}\,(j-i)$ so that $\mathrm{Curl}\,\beta^{u,\sigma,\mathcal{K}_\ep}=0$ on $K_\ep$\,.
Now, since for every $T_\ep\in \mathcal{K}_\ep$\,,
$$
\int_{\partial T_\ep}\beta^{u,\sigma,\mathcal{K}_\ep}\,\tau\,\ud \mathcal{H}^1= 0 \,,
$$
by using Stokes' Theorem, for every smooth open set  $U\subset\Omega$  such that $\partial U\subset K_\ep$, we have
$$
\int_{\partial U}\beta^{u,\sigma,\mathcal{K}_\ep}\,\tau\,\ud\mathcal{H}^1
= \int_{U\cap\partial K_\e}\beta^{u,\sigma,\mathcal{K}_\ep}\,\tau\,\ud\mathcal{H}^1
=-\sum_{T_\ep\subset U}\ud\sigma(T_\ep)=\mu[\sigma](U)\,,
$$
which concludes the proof of property (c) and of the whole lemma.
\end{proof}
The following result allows to extend the field $\beta$ constructed in Lemma \ref{poincare} above up to the boundary of $\Omega$\,. 
\begin{lemma}\label{lm:ext}
Let $\ep>0$ and let $(u,\sigma)\in\AD_\ep(\Omega)\times\AS_\ep(\Omega)$ be such that $\mu[\sigma]\in X_\ep(\Omega)$\,; let moreover $\mathcal{K}_\ep$ and $K_\ep$ be defined as in \eqref{misnul}
and let $\beta=\beta^{u,\sigma,\mathcal{K}_\ep}$ be the field provided by Lemma \ref{poincare}.
Then, there exists a field $\hat\beta=\hat\beta^{u,\sigma,\mathcal{K}_\ep}\in L^2(K_\ep\cup(\Omega\setminus\Omega_\ep);\R^{2\times 2})$
 such that
\begin{itemize}
\item[(i)] $\hat\beta=\beta$ in $K_\ep$\,;
\item[(ii)] $\mathrm{Curl}\,\hat\beta=0$ (in the sense of distributions);
\item[(iii)] $\int_{K_\ep\cup(\Omega\setminus\Omega_\ep)}\C\hat\beta:\hat\beta\,\ud x\le C\int_{K_\ep}\C\beta:\beta\,\ud x$\,, for some constant $C$ independent of $\ep$\,.
\end{itemize}
\end{lemma}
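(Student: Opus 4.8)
My plan is to extend $\beta$ across the thin collar $\Omega\setminus\Omega_\ep$ by working with a \emph{local potential} of $\beta$ and reflecting it across $\partial\Omega$; the point is that $\mathrm{Curl}\,\beta=0$ makes $\beta$ a gradient wherever the domain is simply connected and dislocation-free. First I would exploit the mild separation assumption: since $\mu[\sigma]\in X_\ep(\Omega)$, every triangle meeting $\partial\Omega_\ep$ carries zero dislocation, so $\beta=\beta^{u,\sigma,\mathcal{K}_\ep}$ is curl-free, with no concentrated part, on a one-triangle-thick layer $L_\ep\subset K_\ep$ adjacent to $\partial\Omega_\ep$. On simply connected pieces of $L_\ep$ one can therefore write $\beta=\nabla w$ for a displacement $w$, and the construction of $\hat\beta$ reduces to extending $w$ into the collar and setting $\hat\beta=\nabla\hat w$ there, while keeping $\hat\beta=\beta$ on $K_\ep$. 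Such a field is automatically curl-free across $\partial\Omega_\ep$ as soon as the extended potential is continuous there (so that $\nabla\hat w$ has no singular part on the interface), which gives properties (i) and (ii).

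For the energy bound (iii) the key observation is that $\C\beta:\beta$ controls only the symmetric part $\beta^{\sym}$, whereas a naive reflection would charge the energetically free skew part $\beta^{\skw}$. I would therefore subtract, on each local piece, the optimal infinitesimal rigid motion $r$, whose gradient is skew and hence $\C$-invisible, and extend $w-r$ rather than $w$; adding $r$ back alters neither the curl nor the $\C$-energy. Korn's inequality then bounds $\|\nabla(w-r)\|_{L^2}$ by $\|\beta^{\sym}\|_{L^2}$, that is by $(\int\C\beta:\beta)^{1/2}$, on that piece, and a bounded Sobolev/reflection extension transfers the bound to the collar, yielding $\int_{\mathrm{collar}}\C\hat\beta:\hat\beta\le C\int_{K_\ep}\C\beta:\beta$. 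To keep all constants independent of $\ep$ despite $L_\ep$ and the collar both having width $O(\ep)$, I would localize to $O(\ep)$-sized pieces sitting in finitely many ($\ep$-independent) Lipschitz charts of $\partial\Omega$, rescale by $1/\ep$ so that each piece has unit aspect ratio and fixed Lipschitz character, and use that the relevant Korn and extension estimates are invariant under this rescaling (the Dirichlet energy being scale invariant in two dimensions).

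The remaining points are the gluing and the global monodromy. Local extensions $\hat w_\ell$ built on overlapping pieces differ on overlaps by infinitesimal rigid motions, so I would patch them with a partition of unity $\{\zeta_\ell\}$ subordinate to the pieces; the dangerous term $\sum_\ell\nabla\zeta_\ell\,\hat w_\ell=\sum_\ell\nabla\zeta_\ell(\hat w_\ell-\hat w_{\ell'})$ is controlled via $|\nabla\zeta_\ell|\le C/\ep$ together with the $O(\ep)$-smallness of the differences of local potentials coming from the same Korn estimate. Finally, a single-valued $w$ exists on the annular layer $L_\ep$ only when the circulation of $\beta$ around $\partial\Omega$, namely the total Burgers vector $b_{\mathrm{tot}}=\mu[\sigma](\Omega)$, vanishes; in general I would first subtract the explicit far-field strain $\beta^{b_{\mathrm{tot}},\C}_{\R^2}(\cdot-x_0)$ centered at a fixed interior point $x_0$, extend the now single-valued remainder as above, and add the far field back on the collar. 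I expect the true obstacle to be precisely this last bookkeeping combined with the $\ep$-uniformity: one must verify that subtracting and re-adding the far field, and patching the $\ep$-scale pieces, does not spoil (iii), which ultimately rests on a circulation lower bound $\int_{K_\ep}\C\beta:\beta\gtrsim|b_{\mathrm{tot}}|^2\ep$ matching the $O(|b_{\mathrm{tot}}|^2\ep)$ energy that the far field deposits in the collar.
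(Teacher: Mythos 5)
Your skeleton is the same as the paper's --- its entire proof is the observation that, by \eqref{MSass}, every triangle touching $\partial\Omega_\ep$ is dislocation-free, so that $\beta$ can be ``extended locally by reflection'' --- and you are right that a careful version of this must invoke Korn's inequality (a naive reflection turns the energetically invisible skew part into a symmetric one) with $\ep$-uniform constants obtained by rescaling $O(\ep)$-pieces. But two of your steps fail as described. First, the partition-of-unity gluing is inconsistent with (i) and (ii) simultaneously: if you keep $\hat\beta=\beta$ in $K_\ep$ and set $\hat\beta=\nabla\bigl(\sum_\ell\zeta_\ell \hat w_\ell\bigr)$ in the collar, the tangential trace from outside on $\partial\Omega_\ep$ is $\beta\,\tau+\sum_\ell(\partial_\tau\zeta_\ell)(\hat w_\ell-\hat w_{\ell_0})$, so the very rigid-motion mismatches that you only estimate in energy also produce a nonzero $\mathrm{Curl}\,\hat\beta$ concentrated on $\partial\Omega_\ep$, violating (ii); this must be cancelled by an additional local corrector (e.g.\ solving local div-curl problems cell by cell), not merely bounded. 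Note also that curl-freeness by itself requires no potential at all: defining $\hat\beta(x):=\beta(\Phi(x))\,\mathrm{D}\Phi(x)$ row-wise, for a bi-Lipschitz folding map $\Phi$ of the collar onto the dislocation-free layer, gives a curl-free field because curl is a local condition (local potentials exist near every interface point). The monodromy you worry about is thus an artifact of insisting on a global potential; it re-enters only through the Korn corrections, where it should be treated as a collection of local mismatches.

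Second, and more seriously, the global far-field subtraction is quantitatively unsound. You say yourself that it ``ultimately rests'' on the lower bound $\int_{K_\ep}\C\beta:\beta\gtrsim\ep\,|b_{\mathrm{tot}}|^2$; but the lemma carries no energy-bound hypothesis, and this bound is false in general. Take $N\sim\ep^{-2}$ same-sign dislocations with Burgers moduli $\ep$, arranged as $\sim\ep^{-1}$ parallel walls with spacing of order $\ep$ just inside the boundary layer, which is compatible with \eqref{MSass}. Then $|b_{\mathrm{tot}}|\sim\ep N\sim\ep^{-1}$, so $\ep|b_{\mathrm{tot}}|^2\sim\ep^{-1}$, whereas walls screen their long-range fields (each wall is, at leading order, a skew --- hence $\C$-free --- lattice rotation jump, as in Read--Shockley), so the stored energy is of order $|\log\ep|$ at most: the $O(\ep|b_{\mathrm{tot}}|^2)$ that the far field $\beta^{b_{\mathrm{tot}},\C}_{\R^2}(\cdot-x_0)$ deposits in the collar cannot be absorbed into (iii). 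The correct accounting must keep every cost local to the boundary layer --- bounded by the Korn energies of the adjacent $O(\ep)$-pieces --- which is exactly what the reflection-plus-local-corrector construction achieves and what your global-monodromy bookkeeping cannot.
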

\begin{proof}
It is enough to notice that, in view of \eqref{MSass}, each of the triangles $T_\ep\in\T_\ep(\Omega)$ touching $\partial\Omega_\ep$ satisfies $\mu[\sigma](T_\ep)=0$\,. Therefore, in order to construct a field $\hat\beta$ satisfying properties (i), (ii), and (iii), it is enough to extend locally by reflection the field $\beta$ provided by Lemma \ref{poincare}. 
\end{proof}
We conclude this subsection showing how assumption \eqref{MSass} allows to estimate the total variation of the dislocation measure $\mu$ with the elastic energy $\F_\ep(\mu)$\,.
\begin{lemma}\label{lm:totvar}
There exists $\bar C>0$ such that  $|\frac{\mu}{\ep}|(\Omega)\le \bar C\frac{1}{\ep^2} \F_\ep(\mu)$ for every $\ep>0$ and for every $\mu\in X_\ep(\Omega)$\,. 
\end{lemma}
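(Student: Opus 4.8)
The plan is to prove the lower bound $|\frac{\mu}{\ep}|(\Omega)\le \bar C\frac{1}{\ep^2}\F_\ep(\mu)$ by reducing it to a local statement at the level of a single dislocated triangle together with its surrounding ``elastic annulus,'' which is exactly the object that assumption \eqref{MSass} is designed to isolate. Since $|\frac{\mu}{\ep}|(\Omega)=\frac{1}{\ep}\sum_{T_\ep}|b(T_\ep)|$ and each nonzero Burgers vector $b(T_\ep)\in\ep\Tl$ has modulus at least $\ep$ (the shortest lattice vector), it suffices to show that every triangle $T_\ep$ carrying a dislocation contributes at least a fixed amount $c\,\ep^2$ to the energy, uniformly in $\ep$; summing over the (by \eqref{MSass} mutually non-adjacent, hence disjointly-surrounded) dislocated triangles then yields the claim with $\bar C=1/c$.

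First I would fix a dislocated triangle $T_\ep$ with $\mu(T_\ep)\neq0$ and consider the annulus of neighboring triangles guaranteed by \eqref{MSass} to be dislocation-free: all triangles sharing an edge or vertex with $T_\ep$ lie in $\mathcal{K}_\ep$. On this surrounding patch $P_\ep$ (a fixed finite configuration of $\ep$-triangles, up to the two orientations $T^\pm$ and rescaling by $\ep$) Lemma \ref{poincare} produces a piecewise-constant, curl-free field $\beta=\beta^{u,\sigma,\mathcal{K}_\ep}$ with $F_\ep(u,\sigma;P_\ep)\ge \frac{\sqrt3}{2}\int_{P_\ep}\frac12\C\beta:\beta\,\ud x$, and whose circulation around $\partial T_\ep$ equals $\mu(T_\ep)\neq0$ by property (c). The key point is that this nonzero circulation forces a lower bound on $\int_{P_\ep}|\beta^\sym|^2$: the field cannot be a constant symmetric strain, since a curl-free field with a constant skew part (i.e.\ an infinitesimal rigid motion) has zero circulation, whereas here the circulation is the nonzero lattice vector $b(T_\ep)$. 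Quantitatively, after rescaling $x\mapsto x/\ep$ to the fixed reference patch $P_1$, the energy $\frac1{\ep^2}F_\ep$ becomes a scale-invariant quadratic form in the finitely many values $\{\beta_{|T}\}$, and the constraint that the discrete circulation equals $b(T_\ep)/\ep$ (a nonzero integer lattice vector) is an affine constraint incompatible with the kernel of that quadratic form.

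The heart of the argument is therefore a Korn-type / rigidity inequality on the fixed finite patch $P_1$: there is a constant $c>0$, independent of $\ep$, such that $\int_{P_1}\frac12\C\beta:\beta\,\ud x\ge c\,|b|^2$ for every curl-free piecewise-constant $\beta$ whose circulation around the central triangle is a lattice vector $b$. Because $\C$ controls $|\beta^\sym|^2$ only (via $c_1|\beta^\sym|^2\le\frac12\C\beta:\beta$), I would invoke Korn's inequality — as the authors explicitly flag in the introduction — to bound the full gradient modulo infinitesimal rotations, and then observe that the residual (the circulation, which is insensitive to adding a global rigid motion) is precisely what remains controlled: $\int_{P_1}|\beta^\sym|^2\ge c\,|b|^2$. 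Undoing the scaling gives $F_\ep(u,\sigma;P_\ep)\ge c'\,|b(T_\ep)|^2\ge c'\,\ep\,|b(T_\ep)|$, the last step using $|b(T_\ep)|\ge\ep$.

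Finally I would pass from the fixed $(u,\sigma)$ to the infimum defining $\F_\ep$ and sum. By \eqref{MSass} the patches $P_\ep$ attached to distinct dislocated triangles are disjoint (no two dislocated triangles are adjacent, and a dislocated triangle does not touch $\partial\Omega_\ep$), so their energies add sub-additively into $F_\ep(u,\sigma)$; taking the infimum over admissible $(u,\sigma)$ with $\mu[\sigma]=\mu$ preserves the inequality and yields $\frac1{\ep^2}\F_\ep(\mu)\ge c'\frac1\ep\sum_{T_\ep}|b(T_\ep)|=c'|\tfrac{\mu}{\ep}|(\Omega)$. The main obstacle is making the Korn/rigidity constant $c$ genuinely uniform in $\ep$: this is exactly why one rescales to the single fixed reference patch $P_1$ (finitely many congruence classes, determined by which neighbors are present), so that $c$ depends only on the finite geometry of $P_1$ and on $\C$, not on $\ep$. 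Care is also needed at patches meeting $\partial\Omega_\ep$, but \eqref{MSass} rules out dislocations there, so every counted dislocation sits in the interior with its full surrounding annulus available.
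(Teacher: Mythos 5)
Your proposal follows essentially the same route as the paper's proof: assumption \eqref{MSass} guarantees that each dislocated triangle is surrounded by a dislocation-free patch of vertex-sharing triangles, on which Lemma \ref{poincare} produces a curl-free, piecewise-constant strain whose circulation around the dislocated triangle equals $\mu(T_\ep)\neq 0$; a Korn-type rigidity statement (the energy controls only $\beta^{\sym}$, while the degenerate directions --- constant skew-symmetric fields, i.e.\ infinitesimal rotations --- carry zero circulation) then yields a per-dislocation lower bound of order $|b(T_\ep)|^2\ge\ep\,|b(T_\ep)|$, and one sums and passes to the infimum. The one implementation difference is in how the rigidity is made quantitative and uniform in $\ep$: you rescale to a fixed reference patch and invoke finite-dimensional positive-definiteness of the quadratic form on the quotient by its kernel (valid: the space of compatible piecewise-constant fields on the $12$-triangle annulus is finite-dimensional, its kernel consists exactly of global constant skew fields since adjacent skew constants must agree across each shared edge, and these have vanishing circulation), whereas the paper works directly on the explicit annuli $A_{\frac{\sqrt3}{3}\ep,\frac{5\sqrt3}{12}\ep}(p)$, subtracts the average skew part $S_\ep^p$, applies the continuum Korn inequality, and concludes via Fubini over the circles $\partial B_\rho(p)$ and Jensen's inequality using $\int_{\partial B_\rho(p)}(\beta-S_\ep^p)\,\tau\,\ud\Huno=\mu(p)$; scaling then gives the uniform explicit constant for free. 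Both devices are sound and yield the same estimate.

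Two points to correct in your write-up. First, your claim that the vertex-star patches of distinct dislocated triangles are disjoint under \eqref{MSass} is false: \eqref{MSass} only forbids dislocated triangles from sharing boundary points, so two dislocated triangles can sit two triangles apart, in which case a triangle sharing a vertex with both belongs to both patches. This does not break your argument --- either observe that each bond lies in a uniformly bounded number of patches, so $\sum_p F_\ep(u,\sigma;P^p_\ep)\le C\,F_\ep(u,\sigma)$ with a universal overlap constant that only affects $\bar C$, or shrink to the annuli $A_{\frac{\sqrt3}{3}\ep,\frac{5\sqrt3}{12}\ep}(p)$ as the paper does, which are genuinely pairwise disjoint because barycenters of non-touching dislocated triangles are at distance larger than twice the outer radius. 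Second, your opening reduction (``each dislocated triangle contributes a fixed amount $c\,\ep^2$'') would be insufficient when $|b(T_\ep)|\gg\ep$, since $|\frac{\mu}{\ep}|(\Omega)$ weights each point by $|b(T_\ep)|/\ep$; the bound you actually prove later, $c\,|b(T_\ep)|^2\ge c\,\ep\,|b(T_\ep)|$, handles this correctly and mirrors the paper's step $|\frac{\mu}{\ep}(p)|^2\ge|\frac{\mu}{\ep}(p)|$, so only the summary sentence needs fixing.
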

\begin{proof}
Let $\ep>0$ and $\mu\in X_\ep(\Omega)$ be fixed. Let moreover $(u,\sigma)\in\AD_\ep(\Omega)\times\AS_\ep(\Omega)$ with $\mu[\sigma]=\mu$\,. 
For every $p\in\supp\mu$ let $\mathcal{K}_{\ep}^p$ be the set of the triangles $T_\ep\neq T_\ep^p$ sharing at least one vertex with the triangle $T_{\ep}^p$ centered at $p$ and let $K_\ep^{p}:=\bigcup_{T_\ep\in\mathcal{K}_\ep^p}T_\ep$\,. By the very definition of $X_\ep(\Omega)$, we have that $\mu(T_\ep)=0$ for every $T_\ep\in\bigcup_{p\in\supp\mu}\mathcal{K}_\ep^p$.
 Let $\beta^{u,\sigma,\mathcal{K}_\ep^p}$ be the map defined in Lemma \ref{poincare} above. 
Recalling the notation for the annulus $A_{r,R}(p)$\,, for every $p\in\supp\mu$ we set
$$
S_{\ep}^p:=\frac{1}{|A_{\frac{\sqrt{3}}{3}\ep,\frac{5\sqrt{3}}{12}\ep}(p)|}\int_{A_{\frac{\sqrt{3}}{3}\ep,\frac{5\sqrt{3}}{12}\ep}(p)}(\beta^{u,\sigma,\mathcal{K}^p_\ep})^{\skw}\ud y\,,
$$
where, for every $\beta\in\R^{2\times 2}$\,, we have set $\beta^\skw:=\frac{1}{2}(\beta-\beta^*)$\,.
By Korn's inequality,  there exists a constant $\kappa$ (independent of $\ep$ and $p$) such that
\begin{equation*}
\begin{aligned}
 \int_{A_{\frac{\sqrt{3}}{3}\ep,\frac{5\sqrt{3}}{12}\ep}(p)}|(\beta^{u,\sigma,\mathcal{K}^p_\ep})^{\sym}|^2\ud x\ge\ & \kappa \int_{A_{\frac{\sqrt{3}}{3}\ep,\frac{5\sqrt{3}}{12}\ep}(p)}\big|\beta^{u,\sigma,\mathcal{K}^p_\ep}-S_\ep^p\big|^2\ud x \\
\ge\ &\kappa\int_{\frac{\sqrt 3}{3}\ep}^{\frac{5\sqrt{3}}{12}\ep}\int_{\partial B_\rho(p)}\big|(\beta^{u,\sigma,\mathcal{K}^p_\ep}-S_\ep^p)\,\tau\big|^2\,\ud\Huno\,\ud\rho\\
\ge\ &
\frac{\kappa}{2\pi}\log 2|\mu(p)|^2
= \frac{\kappa}{2\pi}\log 2\,\ep^2\Big|\frac{\mu}{\ep}(p)\Big|^2
\ge  \frac{\kappa}{2\pi}\log 2\,\ep^2\Big|\frac{\mu}{\ep}(p)\Big|\,,
\end{aligned}
\end{equation*}
where 
the second inequality follows by Fubini theorem, estimating $|\beta^{u,\sigma,\mathcal{K}^p_\ep}-S_\ep^p|$ from below by its tangential part, and the third one is a consequence of Jensen's inequality using Lemma \ref{poincare}(c) together with
$$
\int_{\partial B_\rho(p)} S^p_\ep\,\tau\,\ud\Huno=0\,.
$$
By assumption \eqref{MSass}, Lemma \ref{poincare}(b) and the very definition of $\C$ in \eqref{isotensor}, using that the annuli $A_{\frac{\sqrt{3}}{3}\ep,\frac{5\sqrt{3}}{12}\ep}(p)$ are pairwise disjoint and contained in $K_\ep^p$\,, we thus deduce that
\begin{equation*}
\begin{aligned}
F_\ep(u,\sigma)\ge&\frac{\sqrt 3}{2} \sum_{p\in\supp\mu}\frac 1 2\int_{K_\ep^p} \C\beta^{u,\sigma,\mathcal{K}_\ep^p}:\beta^{u,\sigma,\mathcal{K}_\ep^p}\,\ud x\ge C\sum_{p\in\supp\mu} \int_{A_{\frac{\sqrt{3}}{3}\ep,\frac{5\sqrt{3}}{12}\ep}(p)}|(\beta^{u,\sigma,\mathcal{K}^p_\ep})^{\sym}|^2\ud x\\
\ge&   C\ep^2 \Big|\frac{\mu}{\ep}\Big|(\Omega)\,,
\end{aligned}
\end{equation*}
which, taking the infimum over the pairs $(u,\sigma)\in\AD_\ep(\Omega)\times\AS_\ep(\Omega)$ with $\mu[\sigma]=\mu$\,, provides the claim for $\bar C=\frac{1}{C}$\,.
\end{proof}
\begin{remark}\label{counter-MS}
Without the assumption \eqref{MSass}, Lemma \ref{lm:totvar} does not hold. 
As an example,  for every $\ep>0$\,, let
$u_\ep\equiv0$ and
  \begin{equation*}
  \sigma_\ep(i,j):=\left\{\begin{array}{ll}
  +\sqrt3\ep \,e_2&\textrm{if }j=i+\e e_1\,,\\
  -\sqrt3\ep \,e_2&\textrm{if }j=i-\e e_1\,,\\
  0&\textrm{elsewhere in }\Omega_\ep^1\,.
  \end{array}
  \right.
  \end{equation*}
Trivially, $|\frac{\mu[\sigma_\ep]}{\ep}|(\Omega)\sim \frac{1}{\ep^2}$, $\big\|\frac{\mu[\sigma_\ep]}{\ep}\big\|_{\flt}\sim\frac{1}{\ep}$, and
$\F_\ep(\mu[\sigma_\ep])=F_\ep(u_\ep,\sigma_\ep)\equiv 0$\,. \end{remark}
\subsection{Proof of Theorem \ref{mainthm}}
We are now in a position to prove Theorem \ref{mainthm}.
\begin{proof}[Proof of Theorem \ref{mainthm}(i)]
For every $\ep>0$\,, let $(u_\ep,\sigma_\ep)\in\AD_\ep(\Omega)\times\AS_\ep(\Omega)$ with $\mu[\sigma_\ep]=\mu_\ep$ be such that
\begin{equation}\label{orainf2}
F_\ep(u_\ep,\sigma_\ep)\le 2\F_\ep(\mu_\ep)\,.
\end{equation}
We let
\begin{equation}\label{misnul2}
\mathcal{K}_\ep:=\{T_\ep\in\T_\ep(\Omega)\,:\,\mu_\ep(T_\ep)=0\}\,\qquad K_\ep:=\bigcup_{T_\ep\in\mathcal{K}_\ep}T_\ep\,,
\end{equation}
and we set  $\beta_\ep:=\beta^{u_\ep,\sigma_\ep,\mathcal{K}_\ep}$ and $\hat\beta_\ep:=\hat\beta^{u_\ep,\sigma_\ep,\mathcal{K}_\ep}$, where $\beta^{u_\ep,\sigma_\ep,\mathcal{K}_\ep}$ and  $\hat\beta^{u_\ep,\sigma_\ep,\mathcal{K}_\ep}$ are the fields provided by Lemmas \ref{poincare} and \ref{lm:ext}, respectively. Furthermore, we set $\tilde\mu_\ep:=\frac{\mu_\ep}{\ep}$ and $\tilde\beta_\ep:=\frac{\hat\beta_\ep}{\ep}$\,.
Recalling the definition of $\Omega_\ep(\tilde\mu_\ep)$ in \eqref{dombuc} and
 using, in order of appearance, Lemma \ref{lm:ext}(iii), Lemma \ref{poincare}(b), \eqref{orainf2} and the energy bound,
we have
\begin{equation}\label{unocomp}
\begin{aligned}
\frac 1 2 \int_{\Omega_\ep(\tilde\mu_\ep)}\C\tilde\beta_\ep:\tilde\beta_\ep\,\ud x=\ & \frac{1}{2\ep^2} \int_{\Omega_\ep(\tilde\mu_\ep)}\C\hat\beta_\ep:\hat\beta_\ep\,\ud x\le \frac{C}{2\ep^2}\int_{K_\ep} \C\beta_\ep:\beta_\ep\,\ud x\\
\le\ & \frac{C}{\ep^2} F_\ep(u_\ep,\sigma_\ep)\le\frac{C}{\ep^2}\F_\ep(\mu_\ep)\le C|\log\ep|\,;
\end{aligned}
\end{equation}
moreover, by Lemma \ref{lm:totvar} and by the energy bound, we have
\begin{equation}\label{duecomp}
|\tilde\mu_\ep|(\Omega)\le C|\log\ep|\,.
\end{equation}
In view of Lemma \ref{lm:ext}(ii) and of Lemma \ref{poincare}(c), we have $\tilde\beta_\ep\in\as_\ep(\tilde\mu_\ep)$ for every $\ep>0$ (with $A=\Omega$ in \eqref{def:admstr})\,, so that, by \eqref{unocomp} and \eqref{duecomp}, applying Theorem \ref{thmcont}(i) with $\mu_\ep=\tilde\mu_\ep$ and $\beta_\ep=\tilde\beta_\ep$, we deduce the claim.
\end{proof}
\begin{proof}[Proof of Theorem \ref{mainthm}(ii)]
We can assume without loss of generality that $\F_\ep(\mu_\ep)\le C\ep^2|\log\ep|$\,, which in view of Lemma \ref{lm:totvar}, implies that $
\big|\frac{\mu_\ep}{\ep}\big|(\Omega)\le C|\log\ep|$\,.

Let $A\subset\subset\Omega$ be an open bounded subset of $\R^2$ with Lipschitz continuous boundary such that $\supp\mu\subset A$\,. Let $\ep>0$ small enough such that $A\subset\Omega_{\ep}$\,.
Let moreover $(u_\ep,\sigma_\ep)\in\AD_\ep(\Omega)\times\AS_\ep(\Omega)$ with $\mu[\sigma_\ep]=\mu_\ep$ be such that
\begin{equation}\label{orainf3}
\F_\ep(\mu_\ep)\le F_\ep(u_\ep,\sigma_\ep)+\ep^2\,,
\end{equation} 
and let $\mathcal{K}_\ep$ be defined by \eqref{misnul2};
we set
 $\beta_\ep:=\beta^{u_\ep,\sigma_\ep,\mathcal{K}_\ep}$\,, where $\beta^{u_\ep,\sigma_\ep,\mathcal{K}_\ep}$ is the field provided by Lemma \ref{poincare}\,, $\tilde\mu_\ep:=\frac{\mu_\ep}{\ep}$ and $\tilde\beta_\ep:=\frac{\beta_\ep}{\ep}$\,.
 By \eqref{orainf3} and Lemma \ref{poincare} we have that $\tilde\beta_\ep\in\as_\ep(\tilde\mu_\ep)$ (for every $\ep>0$) and
 $$
 \F_\ep(\mu_\ep)\ge F_\ep(u_\ep,\sigma_\ep)-\ep^2\ge \frac{\sqrt{3}}{2} \ep^2E_\ep(\tilde\beta_\ep;A_\ep(\tilde \mu_\ep))-\ep^2\,,
 $$ 
 whence the claim follows by applying Theorem \ref{thmcont}(ii) with $\mu_\ep=\tilde\mu_\ep$ and $\beta_\ep=\tilde\beta_\ep$\,.
\end{proof}
\begin{proof}[Proof of Theorem \ref{mainthm}(iii)]
\begin{figure}
\definecolor{ffffff}{rgb}{1,1,1}
\definecolor{grigio}{rgb}{0.4,0.4,0.4}
\definecolor{uuuuuu}{rgb}{0,0,0}
\begin{tikzpicture}[line cap=round,line join=round,x=1cm,y=1cm,rotate=270,scale=1]
\clip(-.5,-1) rectangle (5.5,7);
\draw [line width=0.8pt] (0,0)-- (5.196152422706632,3);
\draw [line width=0.8pt] (0,1)-- (5.196152422706632,4);
\draw [line width=0.8pt] (0,2)-- (5.196152422706632,5);
\draw [line width=0.8pt] (0,3)-- (5.196152422706632,6);
\draw [line width=0.8pt] (0,4)-- (3.4641016151377544,6);
\draw [line width=0.8pt] (0,5)-- (1.7320508075688772,6);
\draw [line width=0.8pt] (1.7320508075688772,0)-- (5.196152422706632,2);
\draw [line width=0.8pt] (3.4641016151377544,0)-- (5.196152422706632,1);
\draw [line width=0.8pt] (5.196152422706632,0)-- (0,3);
\draw [line width=0.8pt] (5.196152422706632,1)-- (0,4);
\draw [line width=0.8pt] (5.196152422706632,2)-- (0,5);
\draw [line width=0.8pt] (5.196152422706632,3)-- (0,6);
\draw [line width=0.8pt] (5.196152422706632,4)-- (1.7320508075688772,6);
\draw [line width=0.8pt] (5.196152422706632,5)-- (3.4641016151377544,6);
\draw [line width=0.8pt] (3.4641016151377544,0)-- (0,2);
\draw [line width=0.8pt] (1.7320508075688772,0)-- (0,1);
\draw [line width=0.8pt] (0,0)-- (0,6);
\draw [line width=0.8pt] (0.8660254037844386,0.5)-- (0.8660254037844386,5.5);
\draw [line width=0.8pt] (1.7320508075688772,0)-- (1.7320508075688772,6);
\draw [line width=0.8pt] (2.598076211353316,0.5)-- (2.598076211353316,5.5);
\draw [line width=0.8pt] (3.4641016151377544,0)-- (3.4641016151377544,6);
\draw [line width=0.8pt] (4.330127018922193,0.5)-- (4.330127018922193,5.5);
\draw [line width=0.8pt] (5.196152422706632,0)-- (5.196152422706632,6);
\draw [line width=1.6pt] (0.57735026918962,1)-- (0.57735026918962,6); 
\draw [line width=1.6pt] (4.90747728811181893,4.5)-- (2.3094010767585029897,6); 
\draw [fill=grigio] (0,0) circle (2.5pt);
\draw [fill=grigio] (0,1) circle (2.5pt);
\draw [fill=grigio] (0,2) circle (2.5pt);
\draw [fill=grigio] (0,3) circle (2.5pt);
\draw [fill=grigio] (0,4) circle (2.5pt);
\draw [fill=grigio] (0,5) circle (2.5pt);
\draw [fill=grigio] (0,6) circle (2.5pt);
\draw [fill=grigio] (0.8660254037844386,0.5) circle (2.5pt);
\draw [fill=grigio] (0.8660254037844386,1.5) circle (2.5pt);
\draw [fill=grigio] (0.8660254037844386,2.5) circle (2.5pt);
\draw [fill=grigio] (0.8660254037844386,3.5) circle (2.5pt);
\draw [fill=grigio] (0.8660254037844386,4.5) circle (2.5pt);
\draw [fill=grigio] (0.8660254037844386,5.5) circle (2.5pt);
\draw [fill=grigio] (1.7320508075688772,0) circle (2.5pt);
\draw [fill=grigio] (1.7320508075688772,1) circle (2.5pt);
\draw [fill=grigio] (1.7320508075688772,2) circle (2.5pt);
\draw [fill=grigio] (1.7320508075688772,3) circle (2.5pt);
\draw [fill=grigio] (1.7320508075688772,4) circle (2.5pt);
\draw [fill=grigio] (1.7320508075688772,5) circle (2.5pt);
\draw [fill=grigio] (1.7320508075688772,6) circle (2.5pt);
\draw [fill=grigio] (2.598076211353316,0.5) circle (2.5pt);
\draw [fill=grigio] (2.598076211353316,1.5) circle (2.5pt);
\draw [fill=grigio] (2.598076211353316,2.5) circle (2.5pt);
\draw [fill=grigio] (2.598076211353316,3.5) circle (2.5pt);
\draw [fill=grigio] (2.598076211353316,4.5) circle (2.5pt);
\draw [fill=grigio] (2.598076211353316,5.5) circle (2.5pt);
\draw [fill=grigio] (3.4641016151377544,0) circle (2.5pt);
\draw [fill=grigio] (3.4641016151377544,1) circle (2.5pt);
\draw [fill=grigio] (3.4641016151377544,2) circle (2.5pt);
\draw [fill=grigio] (3.4641016151377544,3) circle (2.5pt);
\draw [fill=grigio] (3.4641016151377544,4) circle (2.5pt);
\draw [fill=grigio] (3.4641016151377544,5) circle (2.5pt);
\draw [fill=grigio] (3.4641016151377544,6) circle (2.5pt);
\draw [fill=grigio] (4.330127018922193,0.5) circle (2.5pt);
\draw [fill=grigio] (4.330127018922193,1.5) circle (2.5pt);
\draw [fill=grigio] (4.330127018922193,2.5) circle (2.5pt);
\draw [fill=grigio] (4.330127018922193,3.5) circle (2.5pt);
\draw [fill=grigio] (4.330127018922193,4.5) circle (2.5pt);
\draw [fill=grigio] (4.330127018922193,5.5) circle (2.5pt);
\draw [fill=grigio] (5.196152422706632,0) circle (2.5pt);
\draw [fill=grigio] (5.196152422706632,1) circle (2.5pt);
\draw [fill=grigio] (5.196152422706632,2) circle (2.5pt);
\draw [fill=grigio] (5.196152422706632,3) circle (2.5pt);
\draw [fill=grigio] (5.196152422706632,4) circle (2.5pt);
\draw [fill=grigio] (5.196152422706632,5) circle (2.5pt);
\draw [fill=grigio] (5.196152422706632,6) circle (2.5pt);
\draw [fill=uuuuuu] (0.5773502691896257,1) circle (2pt);
\draw[color=uuuuuu] (0.3,1.3) node {$x^{k_1\!,\ep}$};
\draw[color=uuuuuu] (0.65,6.4) node {$\gamma^{k_1\!,\ep}$};
\draw [fill=uuuuuu] (4.90747728811181893,4.5) circle (2pt); 
\draw[color=uuuuuu] (4.8,4.92) node {$x^{k_2\!,\ep}$};
\draw[color=uuuuuu] (2.37,6.4) node {$\gamma^{k_2\!,\ep}$};
\draw[color=uuuuuu] (-0.2,4.25) node {$i{+}\e\eta$};
\draw[color=uuuuuu] (1.2,3.5) node {$i$};
\draw[color=uuuuuu] (-0.2,2.85) node {$i{-}\e\eta$};
\end{tikzpicture}
\caption{Geometric construction used in the proof of Theorem \ref{mainthm}(iii). The position of each dislocation is approximated with a point $x^{k,\ep}$ sitting on the barycenter of a triangle of $\T_\ep(\Omega)$\,. 
The approximate displacement is the restriction to the lattice points of a continuum displacement having a jump on each half-line $\gamma^{k,\ep}$\,.
The sets $S^1_{k,\ep}$ defined in the proof contain bonds across the jumps: for example, the points $i$\,, $i{\pm}\ep\eta$ displayed in the picture are such that
both bonds $(i,i{-}\ep\eta)$\,, $(i,i{+}\ep\eta)$ belong to the set $S^1_{k_1,\ep}$\,.} \label{fig-proof}
\end{figure}
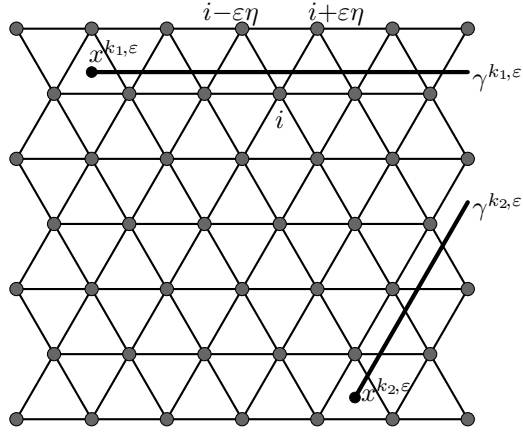
Let $\mu=\sum_{k=1}^{K}b^k\delta_{x^k}\in X(\Omega)$\,. 
By standard density arguments in $\Gamma$-convergence we can assume that $\ffi(b^k)=\psi(b^k)$ for every $k=1,\ldots,K$\,, i.e., that $|b^k|=1$ for every  $k=1,\ldots,K$\,.
Let $\ep\DT(\Omega)$ be the set of the barycenters of the triangles $T_\ep^+=i+\ep T^+\in\T_\ep(\Omega)$\,.

For every $k=1,\ldots,K$ and for every $\ep>0$\,, let $x^{k,\ep}\in\ep\DT(\Omega)$ be such that
$$
|x^{k,\ep}-x^k|=\min\{|y-x^k|\,:\,y\in \ep\DT(\Omega) \}\,.
$$ 
For every $k=1,\ldots,K$ set (see Figure \ref{fig-proof})
$$
\gamma^{k,\ep}:=\left\{
\begin{array}{ll}
\{x^{k,\ep}+\lambda e_1\,:\,\lambda\ge 0\}&\textrm{if }b^k =\pm\,  e_1\,,\\
\{x^{k,\ep}+\lambda \nu\,:\,\lambda\ge 0\}&\textrm{if }b^k=\pm\, \nu\,,\\
\{x^{k,\ep}+\lambda \eta\,:\,\lambda\ge 0\}&\textrm{if }b^k= \pm\, \eta\,.
\end{array}
\right.
$$
Let $\bar u^{k,\ep}\in C^2(\R^2\setminus \gamma^{k,\ep};\R^2)$ be a function satisfying the following properties,
\begin{equation}\label{bbb}
S_{\bar u^{k,\ep}}=\gamma^{k,\ep}\,,\qquad [\bar u^{k,\ep}]=b^k\textrm{ on }\gamma^{k,\ep}\,,\qquad \nabla \bar u^{k,\ep}(\cdot)=\beta^{b_k,\C}_{\R^2}(\cdot-x^{k,\ep})\,,
\end{equation}
where $\beta^{b,\C}_{\R^2}$ is defined in \eqref{betapiano}, for the choice of $g^{b,\C}$ and $f^{b,\C}(\theta)$ in \eqref{gfottime}.
Note that the function $\bar u^{k,\ep}$ is uniquely determined up to a constant.
We set $u^{k,\ep}:=\ep \bar u^{k,\ep}$ and  we define $u_\ep:\ep\Tl\to \R^2$ as 
$$
u_\ep(i):=\displaystyle \sum_{k=1}^{K}u^{k,\ep}(i)\,.
$$
Furthermore, for every $k=1,\ldots,K$ we set
\begin{equation*}
S^{1}_{k,\ep}:=\{(i,j)\in \ep\Tl\times \ep\Tl\,:\,|j-i|=\ep\,,\, [i,j]\cap\gamma^{k,\ep}\neq\emptyset\,,\,\di(i,\gamma^{k,\ep})<\di(j,\gamma^{k,\ep})\}\,,
\end{equation*}
where $[i,j]$ denotes the segment line with endpoints $i$ and $j$, and we define $\sigma_\ep:\ep\Tl\times\ep\Tl\to\ep\Tl$ as
$\sigma_\ep=\sum_{k=1}^{K}\sigma^{k,\ep}$\,, where
\begin{equation*}
\sigma^{k,\ep}(i,j):=
\begin{cases}
- \e b^k &\textrm{ if  }(i,j)\in S_{k,\ep}^1 \,,\\
 +\e b^k  &\textrm{ if }(j,i)\in S_{k,\ep}^1\,,\\
0&\textrm{ elsewhere}\,.
\end{cases}
\end{equation*}
Abusing notation, we still denote by  $u_\ep$ and $\sigma_\ep$ the restrictions of $u_\ep$ and $\sigma_\ep$ to $\Omega_\ep^0$ and $\Omega_\ep^1$, respectively. Then, $u_\ep\in\AD_\ep(\Omega)$ and $\sigma_\ep\in\AS_\ep(\Omega)$\,; 
moreover, for $\ep$ small enough,
\begin{equation*}
\mu_\ep:=\mu[\sigma_\ep]=\ep\sum_{k=1}^{K}b^k\delta_{{x^{k,\ep}}}\in X_\ep(\Omega),
\end{equation*}
and, by construction,
$\frac{\mu_\ep}{\ep}\weakstar\mu$ as $\ep\to 0$\,, which clearly implies $\frac{\mu_\ep}{\ep}\fla\mu$\,.

Therefore, in order to prove \eqref{form:limsup} it is enough to show that
\begin{equation}\label{newlimsup}
\limsup_{\ep\to 0}\frac{F_\ep(u_\ep, \sigma_\ep)}{\ep^2|\log\ep|}\le\frac{\sqrt{3}}{2}\sum_{k=1}^{K}\psi(b^k)\,.
\end{equation}
To this purpose, let $0<r<\frac{1}{4}\min\{\di_{k_1\neq k_2}(x^{k_1},x^{k_2})\,, \di(x^k,\partial\Omega)\}$\,.
In analogy with the notation introduced in Section \ref{sc:model}, for any open ball $B$
we denote by $B_\ep^1$ or $(B)_\ep^1$ the family of nearest neighbor bonds in $B$.
In order to show \eqref{newlimsup}, we preliminarily notice that there exists a constant $C>0$ depending only on $K$ such that
\begin{equation*}
\begin{aligned}
F_\e(u_\e,\sigma_\e)\leq \ &\frac{1}{2\ep^2}\sum_{k=1}^K\sum_{(i,j)\in (B_{2r}(x^{k,\ep}))_\ep^1}[(\ud u^{k,\ep}(i,j)-\sigma^{k,\ep}(i,j))\cdot(j-i)]^2\\
\ &+\frac{1}{2\ep^2}\sum_{k=1}^{K}\sum_{\newatop{l=1}{l\neq k}}^K\sum_{(i,j)\in (B_{2r}(x^{k,\ep}))_\ep^1} [(\ud u^{l,\ep}(i,j)-\sigma^{l,\ep}(i,j))\cdot(j-i)]^2\\
\ &+\frac{1}{\ep^2}\sum_{k=1}^{K}\sum_{\newatop{l=1}{l\neq k}}^K \sum_{(i,j)\in (B_{2r}(x^{k,\ep}))_\ep^1}\big[(\ud u^{k,\ep}(i,j)-\sigma^{k,\ep}(i,j))\cdot(j-i)\big]\\
\ &\phantom{+\frac{C}{\ep^2}\sum_{k=1}^{K}\sum_{\newatop{l=1}{l\neq k}}^K \sum_{(i,j)\in (B_{2r}(x^{k,\ep}))_\ep^1}}\qquad \big[(\ud u^{l,\ep}(i,j)-\sigma^{l,\ep}(i,j))\cdot(j-i)\big]\\
\ &+\frac{C}{\ep^2}\sum_{k=1}^K\sum_{(i,j)\in \Omega_\ep^1\setminus(\bigcup_{l=1}^K B_{r}(x^{l,\ep}))_\ep^1}[(\ud u^{k,\ep}(i,j)-\sigma^{k,\ep}(i,j))\cdot(j-i)]^2\,.
\end{aligned}
\end{equation*}
Therefore, \eqref{newlimsup} is proved once provided that for every fixed $k,l=1,\ldots,K$ with $l\neq k$ it holds 
\begin{eqnarray}\label{ls1}
&&\limsup_{\ep\to 0}\frac{1}{2\ep^4|\log\ep|}\sum_{(i,j)\in (B_{2r}(x^{k,\ep}))_\ep^1}[(\ud u^{k,\ep}(i,j)-\sigma^{k,\ep}(i,j))\cdot(j-i)]^2=\frac{\sqrt{3}}{2}\psi(b^k)\,,\\ \label{ls2}
&&\limsup_{\ep\to 0}\frac{1}{\ep^4|\log\ep|} \sum_{(i,j)\in (B_{2r}(x^{k,\ep}))_\ep^1} [(\ud u^{l,\ep}(i,j)-\sigma^{l,\ep}(i,j))\cdot(j-i)]^2=0\,,\\ \label{ls3}
&&\limsup_{\ep\to 0}\frac{1}{\ep^4|\log\ep|} \sum_{(i,j)\in (B_{2r}(x^{k,\ep}))_\ep^1}\big[(\ud u^{k,\ep}(i,j)-\sigma^{k,\ep}(i,j))\cdot(j-i)\big]\\ \nonumber
&&\phantom{+\frac{C}{\ep^2}\sum_{k=1}^{K}\sum_{\newatop{l=1}{l\neq k}}^K \sum_{(i,j)\in (B_{2r}(x^{k,\ep}))_\ep^1}}\qquad \big[(\ud u^{l,\ep}(i,j)-\sigma^{l,\ep}(i,j))\cdot(j-i)\big]=0\,,\\ \label{ls4}
&&\limsup_{\ep\to 0}\frac{1}{\ep^4|\log\ep|} \sum_{(i,j)\in \Omega_\ep^1\setminus(\bigcup_{l=1}^K B_{r}(x^{l,\ep}))_\ep^1}[(\ud u^{k,\ep}(i,j)-\sigma^{k,\ep}(i,j))\cdot(j-i)]^2=0\,.
\end{eqnarray}
Now, by the very definition of $\beta^{b,\C}_{\R^2}$  in \eqref{betapiano}, for every $b\in\Tl$\,, we have that there exists a universal constant $C>0$ such that
\begin{equation}\label{decay}
|\beta^{b,\C}_{\R^2}(\rho,\theta)|\leq \frac{C}{\rho},\qquad\qquad |\nabla \beta_{\R^2}^{b,\C}(\rho,\theta)|\leq \frac{C}{\rho^2}\,;
\end{equation}
moreover, by the very definition of $u^{k,\ep}$ and $\sigma^{k,\ep}$\,, 
\begin{equation}\label{intdir}
\begin{aligned}
\frac {\ud u^{k,\e}(i,j)}{\e}-\frac{\sigma^{k,\e}(i,j)}{\e}= \int_0^1 \beta^{b^k,\C}_{\R^2}(i+t(j-i)-x^{k,\ep})\,(j-i)\,\ud t\,,
\end{aligned}
\end{equation}
for every $k=1,\ldots,K$\,, for every $T_\ep\in\T_\ep$\,, and for every $(i,j)\in(T_\ep)_\ep^1$\,.
As a consequence, for every $k=1,\ldots,K$\,, for every $T_\ep\in{\T_\ep}$ with $\mu_\ep(T_\ep)=0$\,, for every $(i,j)\in(T_\ep)_\e^1$, and for every $x\in T_\ep$\,, we get
\begin{equation}\label{stima1}
\begin{aligned}
&\ \frac{1}{\ep^2}\Big[\Big({\ud u^{k,\e}(i,j)}-{\sigma^{k,\e}(i,j)}\Big)\cdot(j-i)\Big]^2
=\ \Big[\Big(\frac {\ud u^{k,\e}(i,j)}{\e}-\frac{\sigma^{k,\e}(i,j)}{\e}\Big)\cdot(j-i)\Big]^2\\
=\ &\e^4\Bigg[ \int_0^1 \Big(\frac{j-i}{\e}\Big)^*\beta^{b^k,\C}_{\R^2}(i+t(j-i)-x^{k,\ep})\Big(\frac{j-i}{\e}\Big)\,\ud t\Bigg]^2\\
\leq\ & \e^4 \int_0^1 \Big[\Big(\frac{j-i}{\e}\Big)^*\beta^{b^k,\C}_{\R^2}(i+t(j-i)-x^{k,\ep})\Big(\frac{j-i}{\e}\Big)\Big]^2\,\ud t\\
=\ &\e^4\Big[\Big(\frac{j-i}{\e}\Big)^*\beta^{b^k,\C}_{\R^2}(x-x^{k,\ep})\Big(\frac{j-i}{\e}\Big)\Big]^2\\
&\ +\ep^4\int_0^1\bigg( \Big[\Big(\frac{j-i}{\e}\Big)^*\beta^{b^k,\C}_{\R^2}(i+t(j-i)-x^{k,\ep})\Big(\frac{j-i}{\e}\Big)\Big]^2\\
&\ \phantom{\int_0^1\Big( \Big[\Big(\frac{j-i}{\e}\Big)^*\beta^{b^k,\C}_{\R^2}(i+t(j-i)-x^{k,\ep})}-\Big[\Big(\frac{j-i}{\e}\Big)^*\beta^{b^k,\C}_{\R^2}(x-x^{k,\ep})\Big(\frac{j-i}{\e}\Big)\Big]^2\Big)\, \ud t\bigg)\,,
\end{aligned}
\end{equation}
where the second equality follows from \eqref{intdir},  and the inequality is a consequence of Jensen's inequality. 
Now, by \eqref{decay} for every $k=1,\ldots,K$ and for every $\xi\in\{\pm e_1,\pm\nu,\pm\eta\}$ it holds
$$
\big| \nabla\big( [\xi^*\beta^{b^k,\C}_{\R^2}\xi]^2\big) (x-x^{k,\ep})\big|\leq \frac{C}{|x-x^{k,\ep}|^3}\,\qquad\textrm{ for every }x\neq x^{k,\ep}\,.
$$
In particular, for every $k=1,\ldots,K$\,, for every $T_\ep\in{\T_\ep}$ with $\mu_\ep(T_\ep)=0$\,, for every $(i,j)\in(T_\ep)_\e^1$, and for every $x\in T_\ep$\,,
$$
\Big[\Big(\frac{j-i}{\e}\Big)^*\beta^{b^k,\C}_{\R^2}(i+t(j-i)-x^{k,\ep})\Big(\frac{j-i}{\e}\Big)\Big]^2
-\Big[\Big(\frac{j-i}{\e}\Big)^*\beta^{b^k,\C}_{\R^2}(x-x^{k,\ep})\Big(\frac{j-i}{\e}\Big)\Big]^2
\leq \frac{C\,\ep}{|x-x^{k,\ep}|^3} \,.
$$
Hence, for every $T_\ep\in\T_\ep$ with $\mu_\ep(T_\ep)=0$\,,
by \eqref{stima1} and integrating over $T_\ep$\,, we get
\begin{equation}\label{qq}
\begin{aligned}
&F_\ep(u^{k,\ep},\sigma^{k,\ep};T_\ep)\\
\le\ &  \frac{4}{\sqrt{3}}\ep^2\int_{T_\ep}\Big(\big|e_1^*\beta^{b^k,\C}_{\R^2}(x-x^{k,\ep})e_1\big|^2
+\big|\nu^*\beta^{b^k,\C}_{\R^2}(x-x^{k,\ep})\nu\big|^2+\big|\eta^*\beta^{b^k,\C}_{\R^2}(x-x^{k,\ep})\eta\big|^2\Big)\ud x\\
\ & +C\e^3\int_{T_\ep}\frac{1}{|x-x^{k,\ep}|^3}\,\ud x\\
=\ &\sqrt 3\ep^2\int_{T_\ep}\frac{1}{2}\C\beta_{\R^2}^{b^k,\C}(x-x^{k,\ep}):\beta_{\R^2}^{b^k,\C}(x-x^{k,\ep})\,\ud x+C\e^3\int_{T_\ep}\frac{1}{|x-x^{k,\ep}|^3}\,\ud x\,,
\end{aligned}
\end{equation}
where the equality follows by the second equality in \eqref{ok1} with $\beta$ replaced by $\beta_{\R^2}^{b^k,\C}(x-x^{k,\ep})$\,.
Now, denoting by $T_\ep^{x^{k,\ep}}\in\T_\ep(\Omega)$ the triangle centered at $x^{k,\ep}$\,,
by \eqref{decay} and \eqref{intdir} for every $k=1,\ldots,K$ we get
\begin{equation*}
F_\ep(u^{k,\ep},\sigma^{k,\ep};T_\ep^{x^{k,\ep}})\le C\ep^2\,,
\end{equation*}
for some constant $C$ independent of $\ep$\,.
Moreover, recalling the notation previously introduced for the annuli,
an integration in polar coordinates shows that
$$
\int_{A_{\delta, 2r}(x^{k,\ep})} \frac{1}{|x-x^{k,\ep}|^3}\,\ud x \leq \frac{C}\delta \,,
$$
for every $0<\delta<2r$\,.
Therefore, by \eqref{qq} and \eqref{servealimsup}\,, for every $k=1,\ldots,K$ and for $c>0$ small enough (as for instance $c<\frac{\sqrt{3}}{6}$) we obtain
\begin{equation}\label{termine1}
\begin{aligned}
\ &\frac{1}{2\ep^2}\sum_{(i,j)\in (B_{2r}(x^{k,\ep}))_\ep^1}\Big[\Big({\ud u^{k,\e}(i,j)}-{\sigma^{k,\e}(i,j)}\Big)\cdot(j-i)\Big]^2\\
\le\ &\frac{1}{2}\sum_{T_\ep\in\T_\ep(B_{2r}(x^{k,\ep}))}F_\ep(u^{k,\ep},\sigma^{k,\ep};T_\ep)\\
\le\ &\frac{\sqrt{3}}{2}\ep^2\int_{A_{c\ep, 2r}(x^{k,\ep})}\frac 1 2\C\beta^{b^k,\C}_{\R^2}(x-x^{k,\ep}):\beta^{b^k,\C}_{\R^2}(x-x^{k,\ep})\,\ud x+C\ep^2\\
\le\ &\ep^2|\log\ep|\frac{\sqrt{3}}{2}\psi(b^k)+C\ep^2\,,
\end{aligned}
\end{equation}
where the constant $C$ changes from line to line; this proves \eqref{ls1}\,.

Moreover, by \eqref{qq} together with \eqref{decay}, for every $k,l=1,\ldots,K$ with $l\neq k$\,, we have 
\begin{equation}\label{termine2}
\begin{aligned}
&\frac{1}{\ep^4|\log\ep|} \sum_{(i,j)\in (B_{2r}(x^{k,\ep}))_\ep^1} \big[(\ud u^{l,\ep}(i,j)-\sigma^{l,\ep}(i,j))\cdot(j-i)\big]^2\\
\le\ &
\frac{1}{\ep^2|\log\ep|} \sum_{T_\ep\in \T_\ep(B_{2r}(x^{k,\ep}))}
F_\ep(u^{l,\ep},\sigma^{l,\ep}; T_\ep)\\
\le\ &\frac{C}{|\log\ep|}\Big(\int_{B_{2r}(x^{k,\ep})}\C\beta^{b^l,\C}_{\R^2}(x-x^{l,\ep}):\beta^{b^l,\C}_{\R^2}(x-x^{l,\ep})\,\ud x +\ep\int_{B_{2r}(x^{k,\ep})}\frac{1}{|x-x^{l,\ep}|^3}\,\ud x\Big)\\
\le\ & \frac{C_r}{|\log\ep|}\to 0
\quad\textrm{ as }\ep\to 0\,,
\end{aligned}
\end{equation}
where $C_r$ depends only on $r$\,.
Now, by \eqref{termine2} we obtain \eqref{ls2}\,.
Analogously, one can prove that also \eqref{ls4} holds true.

Finally, by the H\"older inequality, using \eqref{termine1} and \eqref{termine2}, for every $k,l=1,\ldots,K$ with $l\neq k$ we get
\begin{equation*}
\begin{aligned}
&\frac{1}{\ep^4|\log\ep|}\sum_{(i,j)\in (B_{2r}(x^{k,\ep}))_\ep^1}\big[(\ud u^{k,\ep}(i,j)-\sigma^{k,\ep}(i,j))\cdot(j-i)\big]
\big[(\ud u^{l,\ep}(i,j)-\sigma^{l,\ep}(i,j))\cdot(j-i)\big]\\
\le\ &\bigg(\frac{1}{\ep^4|\log\ep|}\sum_{(i,j)\in (B_{2r}(x^{k,\ep}))_\ep^1}\big[(\ud u^{k,\ep}(i,j)-\sigma^{k,\ep}(i,j))\cdot(j-i)\big]^2\bigg)^{\frac{1}{2}}\\
&\phantom{\frac{1}{\ep^4|\log\ep|}\bigg(\sum_{(i,j)\in (B_{2r}(x^{k,\ep}))_\ep^1}}\bigg(\frac{1}{\ep^4|\log\ep|}\sum_{(i,j)\in (B_{2r}(x^{k,\ep}))_\ep^1}\big[(\ud u^{l,\ep}(i,j)-\sigma^{l,\ep}(i,j))\cdot(j-i)\big]^2\bigg)^{\frac{1}{2}}\\
\le\ & \frac{C}{\sqrt{|\log\ep|}}\to 0\qquad\textrm{as }\ep\to 0\,,
\end{aligned}
\end{equation*}
which proves \eqref{ls3} and concludes the proof of the $\Gamma$-limsup inequality. 
\end{proof}
\section{Degeneracies of the model and possible kinematic constraints}\label{finalcomment}
We highlight that the functional \eqref{def:en} exhibits some degeneracies that are typical of linearized energies as well as of discrete models built on a reference configuration; the presence of the plastic slip field $\sigma$ actually makes the system even less rigid and produces even more unphysical configurations. 

As an example, let $\Omega:=[-1,1]^2$ and define $u_\ep^\pm\in\AD_\ep(\Omega)$ as
\begin{equation}\label{crack}
u_\ep^\pm(i):=\left\{
\begin{array}{ll}
\pm\ep\nu&\textrm{if }i_2\le 0\\
0&\textrm{elsewhere in }\Omega_\ep^0\,.
\end{array}
\right.
\end{equation}
Notice that $u_\ep^{-}$ can be interpreted as a microscopic crack, whereas $u_\ep^+$ can be seen as an unphysical crack which produces interpenetration of matter
generating the superposition of two lines of atoms.
It is easy to check that there exist $\sigma_\ep^{\pm}\in\AS_\ep(\Omega)$ such that $F_\ep(u_\ep^\pm,\sigma_\ep^\pm)=0$\,. 
Moreover, up to slightly modifying $u_\ep^-$, one can construct a function $u_\ep\in\AD_\ep(\Omega)$ such that the crack does not disconnect $\Omega$ and has two ending points which in our model are classified as dislocations with opposite Burgers vector. In particular, what in our model is identified as a plastic slip, 
in some cases should be rather understood as a microscopic crack opening.

Furthermore, we observe that the energy in \eqref{def:en} is invariant with respect  to integer dilations of the lattice. In order to see this, it is enough to fix $\lambda\in\Z$ and to take the displacement $u_\ep^\lambda$ and the slip $\sigma_\ep^\lambda$ defined by 
\begin{equation}\label{omo}
u_{\ep}^{\lambda}(i):=\lambda\ep i\,\qquad\qquad\sigma_\ep^\lambda(i,j):=\ep\lambda(j{-}i)\,.
\end{equation}
Trivially, $\ud u_\ep^{\lambda}(i,j)\equiv \sigma_\ep^\lambda(i,j)$ and hence $F_\ep(u_\ep^\lambda,\sigma_\ep^\lambda)\equiv 0$\,.

In view of the degeneracies pointed out above, one may ask whether additional suitable kinematic constraints on the plastic 
slips could prevent such unphysical  behaviors.
In the present  framework, it seems  natural to incorporate linearized constraints on the plastic slips, mimicking deviatoric/pure shear  volume preserving conditions on each $\ep$-triangle $T_\ep\in\T_\ep$\,.

Let us discuss now a possible way to  introduce such a kinematic constraint in our model. 
In order to 
  (formally) linearize the nonlinear volume preserving constraint,  consider 
deformation gradients  $\mathrm{Id}+\delta z$\, close to the identity, where  $z\in\R^{2\times 2}$\, represents a plastic deformation gradient,  $\delta>0$ is a small parameter with respect to which the linearization is performed and $\mathrm{Id}$ is the identity matrix in $\R^{2\times 2}$\,. 
Enforcing that the deformed configuration $(\mathrm{Id}+\delta z) (T_\ep)$ has the same (oriented)  area of $T_\e$ we obtain
\[
[(k{-}i)+\delta\,z(k{-}i)]\wedge[(j{-}i)+\delta\,z(j{-}i)] =(k{-}i)\wedge(j{-}i) \,,
\]
for every triple $(i,j,k)$ of counterclockwise oriented vertices of $T_\ep$\,.
Neglecting lower order terms, this yields 
\begin{equation}\label{condliz}
z(k{-}i)\wedge (j{-}i)-z(j{-}i)\wedge (k{-}i)=0\,,
\end{equation} 
for every triple $(i,j,k)$ as above.
Summing \eqref{condliz} over two triples of the type $(i,j,k)$, $(j,k,i)$, one can see that condition \eqref{condliz} is equivalent to the well known  trace free constraint on $z$ ;
we refer the interested reader to  \cite{MS} for a rigorous derivation via $\Gamma$-convergence of such a kind of trace free constraints  starting from continuous nonlinear models in plasticity, relying on  multiplicative rather than additive decompositions of the deformation gradient. 

We now rewrite \eqref{condliz} in terms of the (linear) slip field $\sigma$\,, which is, by construction, given by
\begin{equation}\label{legamesigmaz}
\sigma(i,j)=z(j{-}i)\qquad\qquad\textrm{for every bond }(i,j)\textrm{ of the triangle }T_\ep\,.
\end{equation}
Condition \eqref{condliz} becomes
\begin{equation}\label{condli}
\sigma(i,k)\wedge (j{-}i)-\sigma(i,j)\wedge (k{-}i)=0\,,
\end{equation} 
for every triple $(i,j,k)$ of counterclockwise oriented vertices of $T_\ep$\,.
As a consequence of \eqref{legamesigmaz},  we have that $\mu[\sigma](T_\ep)=0$\,, and hence \eqref{condli} is justified by the reasoning above only on dislocation-free triangles.
On the other hand, one can easily check that the only condition \eqref{condli} (assumed for every triple $(i,j,k)$ of counterclockwise oriented vertices of $T_\ep$) implies that $\mu[\sigma](T_\ep)=0$\,.
Summarizing, in our framework, a reasonable linearization of the volume preserving constraint seems to be provided by condition \eqref{condli} assumed  on all dislocation-free $\ep$-triangles $T_\ep$\,.

Notice that \eqref{condli} rules out the degeneracies pointed out by the examples above. More precisely, one can check that, given a displacement as in \eqref{crack} or\eqref{omo}, the minimum energy  in a dislocation-free triangle among all slip fields fulfilling \eqref{condli} is strictly positive. On the other hand  \eqref{MSass} guarantees that dislocation-free triangles are necessarily present. Finally, our $\Gamma$-convergence result still holds true under the additional constraint \eqref{condli}, 
which is satisfied by the recovery sequence constructed in the proof of Theorem \ref{mainthm}(iii).

\end{document}